\newcommand{\bE}{{\mathbf E}}
\newcommand{\E}{{\mathbf E}}
\newcommand{\one}{{\mathbf 1}}
\newcommand{\eps}{\epsilon}
\newcommand{\wrt}{with respect to }
\newcommand{\R}{{\mathbb R}}
\newcommand{\N}{{\mathbf N}}
\newcommand{\bk}[2]{\left\langle #1,#2\right\rangle}
\newcommand{\cD}{{\mathcal D}}
\newcommand{\cE}{{\mathcal E}}
\newtheorem{thm}{Theorem}[section]
\newtheorem{prop}[thm]{Proposition}
\newtheorem{lem}[thm]{Lemma}
\newtheorem{rem}{Remark}
\newenvironment{proof}{\textbf{Proof:}}{\hfill$\square$}
\begin{document}
	
\title{Variational approach to rare event simulation using least-squares regression}

\author{Carsten Hartmann}
\affiliation{Institute of Mathematics, Brandenburgische Technische Universit\"at Cottbus-Senftenberg, Cottbus, Germany}

\author{Omar Kebiri}
\affiliation{Institute of Mathematics, Brandenburgische Technische Universit\"at Cottbus-Senftenberg, Cottbus, Germany}

\author{Lara Neureither}
\affiliation{Institute of Mathematics, Brandenburgische Technische Universit\"at Cottbus-Senftenberg, Cottbus, Germany}

\author{Lorenz Richter}
\affiliation{Institute of Mathematics, Brandenburgische Technische Universit\"at Cottbus-Senftenberg, Cottbus, Germany}
\affiliation{Institute of Mathematics, Freie Universit\"at Berlin, Berlin, Germany}

\date{\today}                                           

\begin{abstract}
We propose an adaptive importance sampling scheme for the simulation of rare events when the underlying dynamics is given by a diffusion. The scheme is based on a Gibbs variational principle that is used to determine the optimal (i.e. zero-variance) change of measure and exploits the fact that the latter can be rephrased as a stochastic optimal control problem. The control problem can be solved by a stochastic approximation algorithm, using the Feynman-Kac representation of the associated dynamic programming equations, and we discuss numerical aspects for high-dimensional problems along with simple toy examples.     
\end{abstract}

\maketitle

\begin{quotation}
When computing small probabilities associated with rare events by Monte Carlo it so happens that the variance of the estimator is of the same order as the quantity of interest. Importance sampling is a means to reduce the variance of the Monte Carlo estimator by sampling from an alternative probability distribution under which the rare event is no longer rare. The estimator must then be corrected by an appropriate reweighting that depends on the likelihood ratio between the two distributions and, depending on this change of measure, the variance of the estimator may easily increase rather than decrease. e.g. when the two probability distributions are (almost) non-overlapping. The Gibbs variational principle links the cumulant generating function (or: free energy) of a random variable with an entropy minimisation principle, and it characterises a probability measure that leads to importance sampling estimators with minimum variance. When the underlying probability measure is the law of a diffusion process, the variational principle can be rephrased as a stochastic optimal control problem, with the optimal control inducing the change of measure that minimises the variance. In this article, we discuss the properties of the control problem and propose a numerical method to solve it. The numerical method is based on a nonlinear Feynman-Kac representation of the underlying dynamic programming equation in terms of a pair of forward-backward stochastic differential equations that can be solved by least-squares regression. At first glance solving a stochastic control problem may be more difficult than the original sampling problem, however it turns out that the reformulation of the sampling problem opens a completely new toolbox of numerical methods and approximation algorithms that can be combined with  Monte Carlo sampling in a iterative fashion and thus leads to efficient algorithms.

\end{quotation}

\section{Introduction}

The estimation of small probabilities associated with rare events is among the most difficult problems in computational statistics. Typical examples  of rare event probabilities, the precise estimation of which is important, involve protein folding, phase transitions in materials or large-scale atmospheric events, such as extreme heat waves or hurricanes. The hallmark of these rare events is that the average waiting time between the events is orders of magnitude longer than the characteristic timescale of the system---especially the timescale of the switching event itself---which  renders the direct numerical simulation of rare events often infeasible.

We can distinguish between two major classes of sampling techniques: splitting methods such as RESTART\cite{restart} or Adaptive Multilevel Splitting\cite{ams} that decompose state space, but are still essentially based on the underlying probability distribution, and biasing methods, such as importance sampling\cite{lecuyerIS} or the adaptive biasing force method\cite{ABF} that enhance the rare events under consideration by perturbing the underlying probability distribution and thus altering the rare events statistics; see \citet{junejaHandbook} for an overview. We should also mention sequential Monte-Carlo\cite{cerou2012} that combines both worlds and that can be embedded into a splitting-like framework.

In this article, we focus on the second class of methods, namely importance sampling. Specifically, we consider diffusion processes and quantities that have the form of a cumulant generating function (or: thermodynamic free energy) and which are characterised by a Gibbs variational principle on a suitable subspace of the space of probability measures. The Gibbs variational principle expresses a fundamental duality between cumulant generating functions and relative entropy known as the Donsker-Varadhan principle in large deviations theory.\cite{Ellis1985} In our case, the variational principle is a constrained entropy minimisation problem, the minimiser of which defines an optimal change of measure that leads to minimum (i.e.~zero) variance estimators of the quantity of interest.\cite{HartmannEnt2017} The connection between the zero variance estimator and the Gibbs variational principle is essentially of theoretical interest, because the normalisation constant of the optimal change of measure  depends on the quantity of interest. 

In order to turn the Gibbs principle into a workable numerical method, we interpret the variational principle as a stochastic optimal control problem, with the unique optimal control force (or: bias) generating the zero-variance probability measure. 
Specifically, we propose a reformulation of the semilinear dynamic programming equations of the optimal control problem as a pair of uncoupled forward-backward stochastic differential equations (FBSDE) that can be solved by Monte Carlo.\cite{KNH} The advantage of the FBSDE approach is that it offers good control of the variance of the resulting estimators at low additional numerical cost. One of the key results of this paper is that the control that is obtained from the solution to the FBSDE acts as a control variate that, when augmented by an additional bias, produces a whole family of zero-variance estimators. We discuss several variants of the FBSDE method, based on a parametric formulation of the least-squares Monte Carlo algorithms by \citet{GobetEtal2005} and a deep learning based algorithm that is due to \citet{JentzenEtal2017}. 

\subsubsection*{Related work}

The idea of exploiting the variational formulation of cumulant generating functions to devise feedback control based importance sampling strategies for rare events goes back to \citet{DupuisWang2004} who suggested to approximate the optimal change of measure by vanishing viscosity solutions or subsolutions of the associated dynamic programming equations. The thus obtained change of measure can be shown to converge to the optimal exponential change of measure as the probability of the rare event goes to zero, which is implied by the fact that the zero-viscosity solution to the dynamic programming equation is the associated large deviations rate function of the rare event. As a consequence, the resulting estimators are either asymptotically efficient\cite{Weare2012}, when the change of measure is based on the exact viscosity solution, or log asymptotically efficient\cite{DupuisWang2007}, when the viscosity solution is approximated by a subsolution. 
The development of state-dependent importance sampling schemes, that in the context of diffusion processes can be considered as the small noise limit of the control approach considered in this article, was triggered by the observation that an exponential change of measure based on an exponential tilting with a constant tilting parameter may perform worse than standard Monte Carlo.\cite{Glasserman1997}    

The relation between large deviations principles and control has been pointed out quite early in the work by Fleming and co-workers\cite{Fleming1977,fleming1995,fleming1997} and later on in the context of risk-sensitive control, \cite{whittle1994,whittle2002,daipra1996,james1992} and we should note that the underlying duality relation has  also been exploited to recast certain stochastic control problems as linear elliptic or parabolic boundary value problems\cite{kappen2005,todorov2009,SWH,toussaint2012} or to solve data assimilation problems.\cite{Kappen2016,Opper2012,Reich2018} \textcolor{black}{Using FBSDE numerics to solve the dynamic programming equations associated with certain stochastic control problems, similar to the ones considered in this paper, has been recently suggested in \citet{exarchos2018}, \citet{pham1} and \citet{pham2}.}

\subsubsection*{Outline of the article}

The article is structured a follows: In Section \ref{sec:resim} we explain the basic importance sampling problem for stochastic processes and, in case of a diffusion process, characterise the optimal change of measure in terms of a solution to an optimal control problem. Section \ref{sec:fbsde} is devoted to the reformulation of the optimal control problem, or more precisely to the reformulation of the associated dynamic programming equation in form of an FBSDE pair; the main result of this section is that we show that there is a family of equivalent FBSDE pairs that lead to zero-variance importance sampling estimators. The numerical discretisation of the FBSDE is discussed in Section \ref{sec:leastSquares} and illustrated with a few numerical examples in Section \ref{sec:num}.  Conclusions are given in Section \ref{sec:conclude}. The article contains an appendix in which the relation between the optimal change of measure  and Doob's $h$-transform is briefly explained.

\section{Rare event simulation}\label{sec:resim}

Let $(\Omega,\cE,P)$ be a probability space, on which we consider an $\R^{d}$-valued stochastic process $X=(X_{s})_{s\ge 0}$. Suppose that we want to compute a small probability, such as the probability of hitting a set $C\subset\R^{d}$, 
\begin{equation}\label{p}
\theta = P(X_{\tau}\in C)\,,
\end{equation}
where $\tau$ is some a.s.~finite stopping time $\tau<\infty$. For example, $\tau$ may the first hitting time $\tau_{A\cup C}$ of either of the two disjoint sets $A,C\subset\R^{d}$, in which case $\theta$ is the probability to reach $C$ before $A$, in other words: the committor probability; if $\tau$ is the minimum of the first hitting time $\tau_{C}$ of $C$ and a finite time $T\in(0,\infty)$, then $\theta$ is the probability that $\tau_{C}<T$.  
(We assume throughout that all subsets are measurable.)

We assume that $\theta\ll 1$, and without digging into the details of large deviations theory, we call $X_{\tau}\in C$ a \emph{rare event}, simply because of this assumption that implies that $\theta$ is difficult to compute numerically.  To understand why this is the case, consider the Monte Carlo approximation of the parameter $\theta$: given $N$ independent realisations $X(\omega_{1}),\ldots,X(\omega_{N})$ of $X$, 
\begin{equation}\label{phat}
{\theta}_{N} = \frac{1}{N} \sum_{i=1}^{N}\one_{C}(X_{\tau}(\omega_{i}))
\end{equation}
is an unbiased estimator of $\theta$ that converges a.s.~to $\theta$ by the law of large numbers. Moreover the variance of the estimator decreases with rate $1/N$ since
\[
{\rm Var}({\theta}_{N}) = \frac{1}{N}\theta(1-\theta) \le \frac{1}{4N}\,.
\]
The last equation reflects the typical Monte Carlo rate of convergence. Nevertheless the relative error (or: relative standard deviation) is unbounded as a function of $\theta$: 
\[
\delta := \frac{\sqrt{{\rm Var}({\theta}_{N})}}{\bE[{\theta}_{N}]} \sim \frac{1}{N\sqrt{\theta}}\quad\text{as}\quad \theta\to 0\,.
\]
Here $\bE[\cdot]$ denotes the expectation \wrt the probability $P$.
The bottom line is that computing small probabilities such as (\ref{p}) is difficult, since the number of Monte Carlo samples that is required to obtain an accurate estimate grows with $1/\sqrt{\theta}$.

\subsection{Importance sampling}

The idea of importance sampling is to reduce the variance of (\ref{phat}) by drawing the samples from another probability measure, say, $Q$ under which the event is no longer rare. Let $Q$ be absolutely continuous \wrt $P$, so that the likelihood ratio $\varphi(\omega)=(dQ/dP)(\omega)$ exists. We further assume that $\varphi>0$ on the set $\{\omega\in\Omega\colon X_\tau(\omega)\in C\}$.  Then, letting $\bE_Q[\cdot]$ denote the expectation \wrt $Q$, it holds that 
\begin{equation}
P(X_{\tau}\in C) = \bE[\one_{C}(X_\tau)]  = \bE_{Q}[\one_{C}(X_\tau)\varphi^{-1}]\,.
\end{equation}
The last equality gives rise to the importance sampling estimator 
\begin{equation}\label{qhat}
\hat{\theta}_{N} = \frac{1}{N} \sum_{i=1}^{N}\one_{C}(X_{\tau}(\hat{\omega}_{i}))\varphi^{-1}(\hat{\omega}_i)\,,
\end{equation}
where the realisations $X(\hat{\omega}_{i})$ or  $\hat{\omega}_{i}$, respectively, are independent draws from the new probability measure $Q$. It is easy to see that the estimator (\ref{qhat}) is unbiased under $Q$, i.e.~$\bE_Q[\hat{\theta}_N]=\theta$. Moreover choosing $Q$, such that 
\[
\frac{dQ}{dP} = \frac{\one_{C}(X_\tau)}{\theta}\,,
\]
the resulting importance sampling estimator $\hat{\theta}_N$ has zero variance under $Q$, i.e., ${\rm Var}_Q(\hat{\theta}_{N})=0$. We call the change of measure $Q=Q^*$ that reduces the variance to zero -- and gives the correct answer already for $N=1$ -- the \emph{optimal change of measure}. 

Note, however, that the optimal change of measure depends on the sought quantity $\theta$, which is not surprising as it completely removes the randomness from the estimator, but which renders the result somewhat useless.  
Further notice that $Q^*(\cdot)=P(\cdot|X_\tau\in C)$, in other words, the optimal change of measure is given by conditioning the original measure $P$ on the rare event $X_\tau\in C$. 

We will later on discuss the question how to devise approximations to the optimal change of measure.

\subsection{Importance sampling in path space}\label{sec:is}

Throughout the rest of this paper we assume that $X$ is governed by a stochastic differential equation (SDE)
\begin{equation}\label{sde}
dX_{s} = b(X_{s})ds + \sigma(X_{s})dB_{s}\,,\quad X_{0}=x\,,
\end{equation}
where the coefficients $b$ and $\sigma$ are such that (\ref{sde}) has a unique strong solution. For simplicity we further assume that $(\sigma\sigma^T)(\cdot)\colon\R^d\to\R^{d\times d}$ has a uniformly bounded inverse. Our standard example will be a non-degenerate diffusion in an energy landscape,
 \begin{equation}\label{sdegrad}
dX_{s} = -\nabla U(X_{s})ds + \sigma dB_{s} \,,\quad X_0=x\,,
\end{equation}
 with smooth potential energy $U$ and $\sigma>0$ constant. 
 
We will now generalise the previous considerations to more general properties and functionals of (\ref{sde}). To this end, let $O\subset\R^{d}$ denote an open and bounded set with smooth boundary $\partial O$ such that $C\subset\partial O$; we define 
\begin{equation}
\tau=\inf\{t>0\colon X_{t}\notin O\}
\end{equation}
to be the first exit time of the set $O$ and call $W$ the continuous functional
\begin{equation}\label{Wfunc}
W_\tau(X) = \int_{0}^{\tau} f(X_s)\,ds + g(X_\tau)\,,
\end{equation}
of $X$ where $f,g$ are bounded and sufficiently smooth, real valued functions. 
Our aim is to estimate the free energy 	
\begin{equation}\label{cgf}
F(x) = -\log \bE_{x}\left[\exp(-W_\tau)\right]
\end{equation}
that can be considered a scaled version of the cumulant generating function of $W_\tau$, where the expectation is understood \wrt the realisations of the Brownian motion $B=(B_{s})_{s\ge  0}$ in the uncontrolled SDE (\ref{sde}) for a given initial condition $X_{0}=x$. Now, let $X^{u}$ be the solution of the controlled SDE
\begin{equation}\label{drivenSDE}
dX^{u}_{s} = \left(b(X^{u}_{s}) + \sigma(X^{u}_{s})u_{s}\right)ds + \sigma(X^{u}_{s})dB_{s}\,,
\end{equation}
with initial data $X^{u}_{0}=x$.
By Jensen's inequality, \cite{Hartmann2012}
\begin{equation}\label{gibbs}
F(x) \le \bE[W^{u}_{\tau}] + H(Q|P)\,,
\end{equation}
where $H(Q|P)$ denotes the relative entropy or Kullback-Leibler divergence between the probability measures $Q$ and $P$, restricted to the history $\mathcal{F}_{\tau}$ of the stopped process, and we have introduced the shorthands  $Q=Q^{u}$ and $W^{u}_{\tau}=W_{\tau}(X^{u})$ to denote quantities generated by the controlled SDE (\ref{drivenSDE}). 

The inequality (\ref{gibbs}) is the basis for the famous Gibbs variational principle---also known as the Donsker-Varadhan principle in its dual form---that relates the free energy with the (relative) entropy. It can be shown that equality in (\ref{gibbs}) is attained if and only if $Q$ belongs to the exponential family, with $dQ\propto \exp(-W_{\tau})dP$.  

\textcolor{black}{Now, informally, Girsanov's Theorem states that 
\begin{equation}\label{IS}
\bE_{x}\!\left[\exp\left(-W_\tau\right)\right] = \bE_{x}\!\left[\exp(L^u_\tau-W^u_\tau)\right]
\end{equation}
where the expectation on the right hand side is taken over all realisations of the controlled process, and   
\begin{equation}\label{girsanov}
L^{u}_{\tau} = -\int_{0}^{\tau}u_{s}\cdot dB_{s} - \frac{1}{2} \int_{0}^{\tau}|u_{s}|^{2}\,ds\,,
\end{equation}
denotes the log likelihood ratio between the realisations of the controlled SDE (\ref{drivenSDE}) and the uncontrolled SDE (\ref{sde}); see \citet[Ch.~IV.4]{ikeda1989}  or Appendix \ref{sec:girsanov} for an informal derivation of the relation (\ref{IS}).}

The following variational characterisation of the free energy is a straightforward generalisation of the previous considerations and characterises the optimal change of measure for the free energy from $P$ to $Q$ in terms of the solution to an optimal control problem.\cite{BD98,HartmannEnt2017}
\begin{thm}[\citet{HartmannEnt2017}]\label{thm:duality}
Assuming sufficient regularity of the coefficients  $f,g,b,\sigma$ and the boundary of the set $O\subset\R^d$, the free energy is the value function of the following optimal control problem: minimise
\begin{equation}\label{dv2}
J(u) = \bE\!\left[ \int_{0}^{\tau}\left(f(X^u_{s}) + \frac{1}{2} |u_{s}|^{2}\right)ds + g(X^u_\tau)\right]
\end{equation}
where $X^u$ is the solution of the controlled SDE (\ref{drivenSDE}) with $X^{u}_{0}=x$. 
That is, $F(x)=V(x)$ where $V=\min_u J(u)$. The minimiser $u^*$ is unique and given by the feedback law 
\[u^*_s = -\sigma(X_s^u)^T\nabla V(X_s^u)\,.\] 
Moreover, with probability one, 
\begin{equation}\label{fE1}
\bE_{x}\!\left[\exp\left(-W_\tau\right)\right] = \exp\!\left(L^{u^{*}}_\tau-W^{u^{*}}_\tau\right)\,.
\end{equation}
\end{thm}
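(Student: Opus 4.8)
The plan is to show that the free energy $F$ solves the dynamic programming (Hamilton--Jacobi--Bellman) equation of the control problem and then to run a verification argument that simultaneously identifies $F$ with the value function, produces the optimal feedback, and delivers the pathwise identity (\ref{fE1}). First I would set $\psi(x)=\bE_x[\exp(-W_\tau)]=e^{-F(x)}$ and invoke the Feynman--Kac formula: under the stated regularity, $\psi$ is the unique classical solution of the linear boundary value problem
\begin{equation}
\cL\psi - f\psi = 0 \quad\text{in } O, \qquad \psi = e^{-g}\quad\text{on }\partial O,
\end{equation}
where $\cL = b\cdot\nabla + \tfrac12(\sigma\sigma^T):\nabla^2$ is the generator of the uncontrolled SDE (\ref{sde}). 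Applying the logarithmic (Hopf--Cole) transform $F=-\log\psi$, with $\nabla\psi=-\psi\nabla F$ and $\nabla^2\psi=\psi(\nabla F\otimes\nabla F-\nabla^2 F)$, converts this into the semilinear equation
\begin{equation}\label{plan:hjb}
-\cL F + \tfrac12\,|\sigma^T\nabla F|^2 = f \quad\text{in } O, \qquad F = g \quad\text{on }\partial O,
\end{equation}
which is exactly the dynamic programming equation obtained by carrying out the pointwise minimisation $\min_u\{(\sigma u)\cdot\nabla F + \tfrac12|u|^2\}$, attained at $u=-\sigma^T\nabla F$.

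Next I would run the verification step. Applying Itô's formula to the $C^2$ function $F$ along the controlled process (\ref{drivenSDE}) gives, for any admissible $u$,
\begin{equation}
dF(X^u_s) = \big(\cL F + (\sigma u_s)\cdot\nabla F\big)\,ds + (\sigma^T\nabla F)\cdot dB_s .
\end{equation}
Substituting $\cL F = \tfrac12|\sigma^T\nabla F|^2 - f$ from (\ref{plan:hjb}) and completing the square via $\tfrac12|\sigma^T\nabla F|^2 + u_s\cdot\sigma^T\nabla F = \tfrac12|u_s+\sigma^T\nabla F|^2 - \tfrac12|u_s|^2$, then integrating from $0$ to $\tau$, using $F(X^u_\tau)=g(X^u_\tau)$ and taking expectations (the stochastic integral being a true martingale under the integrability the regularity assumptions secure), yields
\begin{equation}\label{plan:verif}
J(u) = F(x) + \tfrac12\,\bE\!\left[\int_0^\tau \big|u_s + \sigma^T\nabla F(X^u_s)\big|^2\,ds\right].
\end{equation}
The remainder is nonnegative and vanishes precisely for the feedback $u^*_s=-\sigma^T\nabla F(X^u_s)$, so $F(x)=\min_u J(u)=V(x)$ with a unique minimiser $u^*$, which proves the first two assertions.

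The almost-sure identity (\ref{fE1}) then drops out of the same computation evaluated at $u=u^*$, \emph{before} taking expectations: with $u=u^*$ the quadratic remainder vanishes identically, so integrating $dF(X^{u^*}_s)$ from $0$ to $\tau$ leaves
\begin{equation}
g(X^{u^*}_\tau) - F(x) = -\int_0^\tau\!\Big(f(X^{u^*}_s)+\tfrac12|u^*_s|^2\Big)\,ds - \int_0^\tau u^*_s\cdot dB_s ,
\end{equation}
which rearranges, using the definitions (\ref{Wfunc}) and (\ref{girsanov}), to $L^{u^*}_\tau - W^{u^*}_\tau = -F(x)$ pathwise. Exponentiating and recalling $e^{-F(x)}=\bE_x[\exp(-W_\tau)]$ gives (\ref{fE1}); the point of the statement is that the right-hand side is a deterministic constant, i.e.\ the corresponding importance sampling estimator has zero variance.

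I expect the main obstacle to be regularity rather than algebra. The Hopf--Cole and Itô steps require $F\in C^2(O)\cap C(\bar O)$ and enough integrability for $\int_0^\cdot(\sigma^T\nabla F)\cdot dB_s$ to be a genuine martingale up to $\tau$, together with $\bE_x[\tau]<\infty$ and the assumed nondegeneracy of $\sigma\sigma^T$ to guarantee well-posedness of (\ref{plan:hjb}) on the bounded domain $O$. These are precisely the points concealed in ``sufficient regularity of the coefficients''; a fully rigorous treatment would either invoke standard elliptic and Feynman--Kac theory on bounded domains, or work with viscosity solutions together with an approximation argument to justify the formal manipulations and the admissibility of $u^*$.
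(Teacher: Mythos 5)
Your proposal is correct, and it assembles the proof along a genuinely different route than the one the paper leans on. The paper imports the theorem from \citet{HartmannEnt2017} and prepares it via the entropic/duality route: Jensen's inequality gives the Gibbs variational principle (\ref{gibbs}), $F(x)\le \bE[W^u_\tau]+H(Q|P)$, with equality if and only if $dQ\propto \exp(-W_\tau)\,dP$; Girsanov identifies $H(Q^u|P)$ with the quadratic cost $\bE\big[\tfrac12\int_0^\tau|u_s|^2ds\big]$, so that $J(u)\ge F(x)$ with equality exactly at the optimal change of measure, and the feedback form of $u^*$ is then read off from the log-transform/$h$-transform derivation of Appendix \ref{sec:doobCtr} (carried out there only for the special case $f=0$, $g=-\log\one_C$). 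Your argument is instead a self-contained PDE verification proof: Feynman--Kac for $\psi=e^{-F}$, Hopf--Cole to arrive at the HJB equation (\ref{hjb}), and It\^o plus completing the square to obtain the identity $J(u)=F(x)+\tfrac12\,\bE\big[\int_0^\tau|u_s+\sigma^T\nabla F(X^u_s)|^2ds\big]$, from which the value, the uniqueness of the minimiser, and---evaluated pathwise at $u=u^*$ before taking expectations---the zero-variance identity (\ref{fE1}) all follow at once. That last pathwise step is essentially the computation the paper performs in Proposition \ref{prop:zerovar} in FBSDE language (the backward equation for $Y_s=V(X_s)$ is precisely It\^o's formula applied to $V$), but applied along the controlled rather than the uncontrolled process. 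What your route buys is that optimality, uniqueness and the almost-sure identity drop out of a single algebraic identity without ever invoking relative entropy or a change of measure; what it costs is the need for a classical $C^2(O)\cap C(\bar O)$ solution and a genuine martingale property of $\int_0^{\cdot}(\sigma^T\nabla F)(X^u_s)\cdot dB_s$ up to $\tau$, which you correctly flag as the place where ``sufficient regularity'' is consumed, whereas the entropic route survives under weaker regularity (and is what underlies the viscosity-solution variants mentioned in the paper's discussion of related work).
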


In other words, the optimal control $u^{*}$ generates a path space measure $Q=Q^{*}$ that yields a zero-variance importance sampling estimator via the identity (\ref{IS}). We refer to Appendix \ref{sec:doobCtr} for a formal derivation of the underlying stochastic optimal control problem.   

\subsubsection*{\textcolor{black}{Importance sampling estimators}}

\textcolor{black}{
In practice, one will not have access to the optimal control and an exact simulation of the process $X^{u}$, but rather use a numerical approximation. In this case, the variance of the importance sampling estimator will be small, but not zero. 
Given $N$ statistically independent numerical approximations $\hat{X}^{u,1},\,\ldots,\,\hat{X}^{u,N}$ of $X^{u}$, all starting at $\hat{X}^{u,i}_{0}=x$, an estimator for the free energy (\ref{cgf}) that replaces (\ref{fE1}) is
\begin{equation}\label{fEest1}
\hat{F}_{N}(x) = -\log \hat{\Psi}_{N}(x)\,,
\end{equation}
with 
\begin{equation}\label{fEest2}
\hat{\Psi}_{N}(x) = \frac{1}{N}\sum_{i=1}^{N} \exp\!\left(\hat{L}^{u,i}_\tau-\hat{W}^{u,i}_\tau\right)
\end{equation}
being an unbiased estimator of the moment generating function $\Psi=\bE\left[\exp\left(-W_\tau\right)\right]$. Here  $\hat{L}^{u,i}_\tau$ and $\hat{W}^{u,i}_\tau$ denote the numerical approximations of the log likelihood  $L^u_\tau$ and the path functional $W_{\tau}^{u}$. Note that, even though the estimator (\ref{fEest2}) is unbiased, the estimator for $F$ is not as it follows by Jensen's inequality that 
 \begin{equation}
\bE\big[\hat{F}_{N}(x)\big] \ge -\log \bE\big[\hat{\Psi}_{N}(x)\big] = F(x)\,.
\end{equation}
Another biased estimator of $F$ is  
\begin{equation}\label{fEest3}
\tilde{F}_{N}(x) = \frac{1}{N}\sum_{i=1}^{N} \left(\hat{W}^{u,i}_\tau-\hat{L}^{u,i}_\tau\right)\,,
\end{equation}
where the bias depends on how close $u$ is to the optimal control $u^{*}$. If $u$ is a good approximation of $u^{*}$, this estimator may turn out to be advantageous in terms of variance. Note that, by the central limit theorem, both  (\ref{fEest2}) and (\ref{fEest3}) are asymptotically normal.}

\section{Nonlinear Feynman-Kac formula}\label{sec:fbsde}

The aim of this section is to give an alternative characterisation of the dual optimal control problem in Theorem \ref{thm:duality} that (a) leads to a practical stochastic approximation algorithm for computing the optimal control, even for high-dimensional problems, and (b) that gives rise to an interpretation of the optimal change of measure in the context of control variates that may have implications for the numerical implementation of adaptive importance sampling schemes.

Applying the dynamic programming principle (see e.g.~\citet[Sec.~IV.5]{fleming2006}) to the stochastic control problem (\ref{dv2}), it follows that the value function $V=\min_{u}J(u)$ or, equivalently, the free energy $F$ solves the stationary HJB equation
\begin{equation}\label{hjb}
\begin{aligned}
LV + h(x,\sigma^{T}\nabla V) & = 0\,, & x& \in O \\ 
V & = g\,, & x& \in \partial O\,,
\end{aligned}
\end{equation}
with the generator
\begin{equation}\label{L}
L = \frac{1}{2}\sigma\sigma^{T}\colon\nabla^{2} + b\cdot\nabla
\end{equation}
and the nonlinearity
\begin{equation}\label{driver}
h(x,z) = - \frac{1}{2} |z|^{2} + f(x)\,.
\end{equation}
The boundary value problem (\ref{hjb})--(\ref{driver}) is the straighforward generalisation of the dynamic programming equation (\ref{hjbeps}) to path functionals of the form (\ref{Wfunc}), and  under suitable regularity assumptions, it can be shown\cite{fleming2006} that it has a classical solution $V\in C^{2}(O)\cap C(\partial O)$.

We will now reformulate the HJB equation as an equivalent system of forward-backward stochastic differential equations (FBSDE) that will be the basis of the numerical approximation of the optimal change of measure. To this end, we define the processes
\begin{equation}\label{YandZ}
Y_s = V(X_s)\,,\quad Z_s = \sigma(X_s)^T\nabla V(X_s)\,
\end{equation}
\textcolor{black}{Now, by Ito's formula, the value function $V(X_{s})$ satisfies 
\begin{equation}
dV(X_s) =  (LV)(X_s) + (\sigma^{T}\nabla V)(X_{s})\cdot dB_{s}\,,
\end{equation}
which upon inserting (\ref{hjb}) and (\ref{YandZ}) yields the following backward stochastic differential equation (BSDE)
\begin{equation}\label{bsde1}
dY_s = -h(X_s,Z_s)ds + Z_s\cdot dB_s\,,
\end{equation}
for the pair $(Y,Z)=(Y_s,Z_s)_{s\ge 0}$. By construction, the equation comes with the terminal condition
\begin{equation}\label{bsde2}
Y_\tau = g(X_\tau)\,,
\end{equation}
where $X$ denotes the solution to the uncontrolled forward SDE (\ref{sde}).} Note that, by definition, $Y$ is continuous and adapted to $X$, and $Z$ is predictable and square integrable, in accordance with the interpretation of $Z_s$ as a control variable. Further note that (\ref{bsde1}) must be understood as a \emph{backward} SDE rather than a \emph{time-reversed} SDE, since, by definition, $Y_s$ at time $s<\tau$ is measurable with respect to the filtration generated by the Brownian motion $(B_r)_{0\leqslant r\leqslant s}$, whereas a time-reversed version of $Y_s$ would depend on $B_\tau$ via the terminal condition $Y_\tau=g(X_\tau)$, which would require a larger filtration.

By exploiting the specific form of the nonlinearity (\ref{driver}) that appears as the driver $h$ in the backward SDE (\ref{bsde1}) and the fact that the forward process $X$ is independent of $(Y,Z)$, we obtain the following representation of the solution to the dynamic programming equation (\ref{hjb})--(\ref{driver}):
\begin{equation}\label{fbsde}
\begin{aligned}
dX_s & = b(X_s)ds + \sigma(X_s)\,dB_s\\
dY_s & = \left(\frac{1}{2}|Z_s|^2-f(X_s)\right)ds + Z_s\cdot dB_s\,,
\end{aligned}
\end{equation}
with boundary data
\begin{equation}\label{fbsde-data}
X_0=x\,,\quad Y_\tau = g(X_\tau)\,.
\end{equation}
The solution to (\ref{fbsde})--(\ref{fbsde-data}) now is a triplet $(X,Y,Z)$, and since $Y$ is adapted, it follows that $Y_0$ is a deterministic function of the initial data $X_0=x$ only. Since $g$ is bounded, the results in \citet{Kobylanski2000} entail existence and uniqueness of the FBSDE (\ref{fbsde}); see also \citet{delbaen2011} for the case of unbounded terminal cost. As a consequence, $Y_0 = V(x)$ equals the value function of our control problem.

\begin{rem}\label{rem:fbsde}
A remark on the role of the control variable $Z_s$ in the BSDE is in order. In (\ref{bsde1}), let $h=0$ and consider a random variable $\xi$ that is square-integrable and $\mathcal{F}_\tau$-measurable where $\mathcal{F}_s$ is the $\sigma$-Algebra generated by $(B_r)_{0\leqslant r\leqslant s}$. Ignoring the measurability for a second, a pair of processes $(Y,Z)$ satisfying
\begin{equation}\label{bsdeEx}
dY_s = Z_s\cdot dB_s\,,\quad Y_\tau=\xi\,.
\end{equation}
is $(Y,Z)\equiv(\xi,0)$, but then $Y$ is not adapted unless the terminal condition $\xi$ is a.s.~constant, because $Y_t$ for any $t<\tau$ is not measurable \wrt $\mathcal{F}_s\subset \mathcal{F}_\tau$. An adapted version of $Y$ can be obtained by replacing $Y_t=\xi$ by its best approximation in $L^2$, i.e.~by the projection $Y_t=\bE[\xi|\mathcal{F}_t]$. Since the thus defined process $Y$ is a martingale  with respect to our filtration, the martingale representation theorem asserts that $Y_t$ must be of the form
\begin{equation}
Y_t = \bE[\xi] + \int_0^t \tilde{Z}_s\cdot dB_s\,,
\end{equation}
for some unique, predictable process $\tilde{Z}$. Subtracting the last equation from $Y_\tau=\xi$ yields
\begin{equation}
Y_t = \xi - \int_t^\tau \tilde{Z}_s\cdot dB_s\,,
\end{equation}
or, equivalently,
\begin{equation}
dY_t = \tilde{Z}_t\cdot dB_t\,,\quad Y_\tau=\xi\,.
\end{equation}
Hence $Z_s=\tilde{Z}_s$ in (\ref{bsdeEx}) is indeed a control variable that makes $Y$ adapted.
\end{rem}

\begin{rem}\label{rem:finiteTime}
The above setting includes cases such as exit probabilities $P(\tau<T)$, in which case the free energy becomes explicitly time-dependent via the initial conditions $X_{t}=x$. We only need to replace $O\subset\R^{d}$ by $O\times [0,T)\subset\R^{d}\times[0,\infty)$ and $X$ by an augmented process $\tilde{X}$ with $\tilde{X}_{t}=(X_{t},t)$ that includes time as an extra state variable. Accordingly, the elliptic operator $L$  must be replaced by the parabolic operator $\tilde{L}=\partial/\partial t + L$. 
\end{rem}

\subsection{From importance sampling to control variates}

The role of the process $Z$ in the FBSDE representation of the dynamic programming equation is not only to guarantee that $Y$ in (\ref{fbsde}) is adapted, so that $Y_0=V(x,0)$ is the value function, but it can be literally interpreted as a control since $Z_s=\sigma(X_s)^T\nabla V(X_s)$, even though it is evaluated along the uncontrolled process $X$ rather than the controlled process $X^u$. 

We will now show that the control $Z_{s}$ plays the role of a control variate that produces a zero-variance estimator.

\begin{prop}\label{prop:zerovar} Consider the solution $(X,Y,Z)$ of  the FBSDE (\ref{fbsde})--(\ref{fbsde-data}). Further let 
\[
L^{Z}_{\tau} = -\int_{0}^{\tau}Z_{s}\cdot dB_{s} - \frac{1}{2} \int_{0}^{\tau}|Z_{s}|^{2}\,ds
\]
Then, with probability one, 
\[
\bE_{x}\left[\exp\left(-W_\tau\right)\right] = \exp\!\left(-L^{Z}_\tau-W_\tau\right)\,
\]
or, equivalently, 
\[
F(x) = L^{Z}_\tau + W_\tau\,,
\]
 where $F(x)=Y_{0}$ is the free energy  (\ref{cgf}). 
\end{prop}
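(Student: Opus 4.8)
The plan is to integrate the backward equation of the FBSDE (\ref{fbsde}) over $[0,\tau]$ and then exploit the fact, already established above, that $Y_0 = V(x) = F(x)$ is a \emph{deterministic} constant. This is what turns an apparently random identity into a deterministic one and is the conceptual heart of the zero-variance statement.

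Concretely, I would integrate the $Y$-equation of (\ref{fbsde}) from $0$ to $\tau$, use the terminal condition $Y_\tau = g(X_\tau)$ from (\ref{fbsde-data}), and solve for $Y_0$ to obtain
\[
Y_0 = g(X_\tau) + \int_0^\tau f(X_s)\,ds - \frac{1}{2}\int_0^\tau |Z_s|^2\,ds - \int_0^\tau Z_s\cdot dB_s\,.
\]
The next step is simply to read off the right-hand side: the first two terms are exactly $W_\tau$ as defined in (\ref{Wfunc}), while the remaining two terms are precisely $L^Z_\tau$. Hence $Y_0 = W_\tau + L^Z_\tau$ holds pathwise. Since $Y_0 = F(x)$ is deterministic, this gives $F(x) = L^Z_\tau + W_\tau$ with probability one, the additive form of the claim; exponentiating and using the definition (\ref{cgf}) of $F$ then yields the multiplicative form $\bE_x[\exp(-W_\tau)] = \exp(-L^Z_\tau - W_\tau)$.

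I do not expect any genuine analytical obstacle here, as the argument is essentially a one-line application of the BSDE dynamics together with the terminal condition. The only points deserving a word of care are the well-definedness of the stochastic integral $\int_0^\tau Z_s\cdot dB_s$ up to the random time $\tau$ --- guaranteed by the square-integrability and predictability of $Z$ noted just after (\ref{fbsde}) together with $\tau<\infty$ a.s. --- and the conceptual observation that although $L^Z_\tau + W_\tau$ is assembled from genuinely path-dependent and stochastic pieces, they conspire to cancel every fluctuation and leave behind the deterministic value $F(x)$. This exact cancellation is precisely what makes $Z$ function as a perfect control variate.
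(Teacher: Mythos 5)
Your proposal is correct and follows essentially the same route as the paper's own proof: both integrate the backward equation of (\ref{fbsde}) over $[0,\tau]$, invoke the terminal condition $Y_\tau = g(X_\tau)$, and identify $Y_0 = F(x)$ to obtain the pathwise cancellation $L^Z_\tau + W_\tau = Y_0$. The only difference is cosmetic — you solve the integrated equation for $Y_0$ while the paper rewrites $L^Z_\tau$ in terms of $Y_0$, $f$, and $g$ — and your added remarks on the well-definedness of the stochastic integral are a harmless refinement.
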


\begin{proof}
Using (\ref{fbsde})--(\ref{fbsde-data}), $L^{Z}_\tau$ can be recast as 
\begin{align*}
 L^{Z}_{\tau} & = - \int_{0}^{\tau}Z_{s}\cdot dB_{s} -  \frac{1}{2} \int_{0}^{\tau}|Z_{s}|^{2}\,ds\\ 
& =  Y_{0} - \int_{0}^{\tau}f(X_s)\,ds - g(X_\tau)
\end{align*}
where we have used that $Y_\tau=g(X_\tau)$. Therefore, using the identification of $Y_0$ with the free energy $F(x)$, we have
\[
 \exp\!\left(-L^{Z}_\tau-W_\tau\right) = \exp(-F(x))\,,
\]
which holds with probability one. 
\end{proof}

\subsection{Importance sampling within control variates}

Even though the importance sampling and the control variate based estimators look very similar, there is an important difference, in that $Z_s=\sigma(X_s)^T\nabla V(X_s)$ is a function of the uncontrolled rather than the controlled process. \textcolor{black}{Thus the second approach does not involve a change of measure which may be advantageous when the existence of the Radon-Nikodym derivative is not guaranteed, which, for example, may be the case when the stopping time $\tau$ is either unbounded or can become very large with a non-negligible probability.} (Note that the controlled process need not be simulated at all.)

Yet, in some situations it may be difficult to sample the terminal condition $g(X_{\tau})$ by forward trajectories, in which case it may be advantageous to use importance sampling, either instead of or within the control variate scheme. To better understand the relation between the two approaches we do a change of drift in the FBSDE, so that the associated HJB equation remains the same. Specifically, consider a change of drift of the form 
\begin{equation}\label{changeDrift}
b \mapsto b + \sigma v
\end{equation}
for some adapted process $v=(v_s)_{s\ge 0}$ that may or may not depend on the state of the process $X^v=(X_s^v)_{s\ge 0}$ with the new drift. Under this change of drift, using the identification $Z^v_s=\sigma(X^v_s)^T\nabla V(X^v_s)$, the original FBSDE (\ref{fbsde}) turns into 
\begin{equation}\label{fbsde2}
\begin{aligned}
dX^v_s & = \left(b(X^v_s) + \sigma(X^v_s)v_s\right)ds + \sigma(X^v_s)\,dB_s\\
dY^v_s & = -h^v(X^v_s,Y_s^v,Z^v_s)\,ds + Z^v_s\cdot dB_s\,,
\end{aligned}
\end{equation}
with the driver 
\begin{equation}\label{driver2}
h^v(x,y,z) = -\frac{1}{2}|z|^2-z\cdot v + f(x)
\end{equation}
and boundary data (\ref{fbsde-data}), with $X$ replaced $X^{v}$. It can be easily checked that (\ref{fbsde2})--(\ref{driver2}) and  (\ref{fbsde}) represent the same HJB equation (\ref{hjb}). 

The change of drift furnishes an exponential change of measure in the free energy functional. We will now show that, for any reasonable choice of an adapted control $v$, say, bounded and continuous, \emph{every} estimator of the form
\begin{equation}\label{fEevery}
\bE_{x}\!\left[\exp\left(-W_\tau\right)\right] = \bE_{x}\!\left[\exp\!\left(\bk{v}{Z^{v}} - L^{H}_\tau-W^{v}_\tau\right)\right],
\end{equation}
with $H_s = Z^v_s$ has zero variance where the expectation on the right hand side is taken over the realisations of the FBSDE (\ref{fbsde2}) with initial conditions $X^{v}_{0}=x$, and 
\begin{equation}\label{scalarProd}
\bk{v}{Z^{v}} = \int_{0}^{\tau} v_{s}\cdot Z^{v }_{s}\,ds\,.
\end{equation}

\begin{prop}
Let $v$ be adapted and such that the FBSDE (\ref{fbsde2}) with driver (\ref{driver2}) has a unique strong solution. 
Then, with probability one, 
\begin{equation}
\bE_{x}\left[\exp\left(-W_\tau\right)\right] = \exp\!\left(\bk{v}{Z^{v}} - L^{H}_\tau-W^{v}_\tau\right)\,.
\end{equation}
\end{prop}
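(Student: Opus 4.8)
The plan is to mimic the proof of Proposition \ref{prop:zerovar}: I would show that the random variable $\bk{v}{Z^{v}} - L^{H}_\tau - W^{v}_\tau$ is in fact pathwise constant and equal to $-F(x)$, so that the exponential on the right-hand side collapses to the deterministic value $\exp(-F(x)) = \bE_x[\exp(-W_\tau)]$. Since the resulting identity is deterministic, the ``with probability one'' clause is automatic.

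First I would record that the drift-shifted FBSDE (\ref{fbsde2}) still encodes the same value function. Applying It\^o's formula to $V(X^v_s)$ along the forward dynamics with drift $b+\sigma v$, the extra drift contributes a term $(\sigma v_s)\cdot\nabla V(X^v_s) = v_s\cdot Z^v_s$, and inserting the HJB relation $LV = \frac{1}{2}|\sigma^T\nabla V|^2 - f$ from (\ref{hjb})--(\ref{driver}) shows that the pair $Y^v_s = V(X^v_s)$, $Z^v_s = \sigma(X^v_s)^T\nabla V(X^v_s)$ solves (\ref{fbsde2}) with driver (\ref{driver2}). By the assumed existence and uniqueness of a strong solution, this identification holds, and in particular $Y^v_0 = V(x) = F(x)$.

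Next I would integrate the backward equation in (\ref{fbsde2}) from $0$ to $\tau$ and use the terminal condition $Y^v_\tau = g(X^v_\tau)$. Writing out the driver (\ref{driver2}) gives
\[
g(X^v_\tau) - Y^v_0 = \int_0^\tau\!\left(\frac{1}{2}|Z^v_s|^2 + v_s\cdot Z^v_s - f(X^v_s)\right)ds + \int_0^\tau Z^v_s\cdot dB_s\,.
\]
Solving for $Y^v_0$ and regrouping the terms according to the definitions of $W^v_\tau$, of $\bk{v}{Z^{v}}$ in (\ref{scalarProd}), and of $L^{H}_\tau$ with $H_s = Z^v_s$, I would obtain the pathwise identity $Y^v_0 = W^v_\tau - \bk{v}{Z^{v}} + L^{H}_\tau$, that is $\bk{v}{Z^{v}} - L^{H}_\tau - W^v_\tau = -Y^v_0 = -F(x)$. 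Exponentiating and using $F(x) = -\log\bE_x[\exp(-W_\tau)]$ then finishes the argument.

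There is no genuinely hard step here; the computation is the drift-shifted analogue of Proposition \ref{prop:zerovar}, with the new $v_s\cdot Z^v_s$ term in the driver exactly cancelling the inserted scalar product $\bk{v}{Z^{v}}$. The only point that warrants care is the identification $Y^v_0 = F(x)$, and this is precisely where the hypothesis on $v$ enters: it is the assumed uniqueness of the strong solution to (\ref{fbsde2}) that licenses the claim that the change of drift (\ref{changeDrift}) leaves the value function---and hence the underlying HJB equation---unchanged.
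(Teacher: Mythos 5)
Your proposal is correct and follows essentially the same route as the paper's proof: integrate the backward equation of (\ref{fbsde2}), regroup the terms into $W^v_\tau$, $\bk{v}{Z^{v}}$ and $L^{H}_\tau$ to get the pathwise identity $\bk{v}{Z^{v}} - L^{H}_\tau - W^v_\tau = -Y^v_0$, and identify $Y^v_0 = F(x)$ because the drift-shifted FBSDE represents the same HJB equation (\ref{hjb}). The only difference is cosmetic: you spell out the It\^o-formula verification of that identification, which the paper disposes of in the sentence ``it can be easily checked'' preceding the proposition and in the closing line of its proof.
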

\begin{proof}
The argument is essentially the same as in the proof of Proposition \ref{prop:zerovar}. Substituting the expressions for $L^{H}_{\tau} - \bk{v}{Z^{v}}$ in the backward part of the FBSDE (\ref{fbsde2}), we conclude that 
\begin{align*}
 L^{H}_{\tau} -  \bk{v}{Z^{v}} & =  Y_{0} - \int_{0}^{\tau}f(X^{v}_s)\,ds - g(X^{v}_\tau)\\
& = Y^{v}_{0} - W_{\tau}^{v}\,.
\end{align*}
Thus, almost surely, 
\[
\exp\!\left(\bk{v}{Z^{v}}  - L^{H}_{\tau}-W^{v}_\tau\right) = \exp(-Y^{v}_{0})\,, 
\] 
where $Y^{v}_{0}=Y_{0}=F(x)$, since (\ref{fbsde2})--(\ref{driver2}) is a Feynman-Kac representation of the HJB equation (\ref{hjb}). 
%
%
\end{proof}

\medskip
Hence we can change the drift of the forward SDE by modifying the control, without affecting the variance of the free energy estimator. Having a zero-variance estimator is of course only useful under the assumption that it is possible to solve the BSDE associated with (\ref{fbsde}) or (\ref{fbsde2}), and changing the drift is also a means to reduce the variance of the numerical scheme for the BSDE. Similar ideas along these lines have been suggested by \citet{BenderMoser2010} who use a change of the drift, with the aim of reducing the variance of the BSDE simulation.

\subsubsection*{\textcolor{black}{More importance sampling estimators}}
\textcolor{black}{
Along the lines of the considerations in Section \ref{sec:is}, we define the standard (biased) estimator for $G:=Y^{v}_{0}$ as 
\begin{equation}\label{fEest4}
\hat{G}_{N}(x) = -\log{\Phi}_{N}(x) \,,
\end{equation}
with 
\begin{equation}\label{fEest5}
\hat{\Phi}_{N}(x) = \frac{1}{N}\sum_{i=1}^{N}\exp\!\left(\big\langle v,\, \hat{Z}^{v,i}\big\rangle_{N}  - \hat{L}^{H,i}_{\tau}-\hat{W}^{v,i}_\tau\right)\,. 
\end{equation}
Here $\hat{L}^{H,i}_\tau$ and $\hat{W}^{v,i}_\tau$ stand for the discretisations of $L^{H}_\tau$ and $W^{v}_\tau$, and the bilinear term $\langle v,\,\hat{Z}^{v,i}\rangle_{N} $ denotes the numerical approximations of the scalar product (\ref{scalarProd}) by a suitable quadrature rule. 
If an accurate approximation of the control $Z$ is available, another biased estimator of $G$ that may have a smaller variance than (\ref{fEest4}) is  
\begin{equation}\label{fEest6}
\tilde{G}_{N}(x) = \frac{1}{N}\sum_{i=1}^{N}\left(\hat{W}^{v,i}_\tau + \hat{L}^{H,i}_{\tau} - \big\langle v,\,\hat{Z}^{v,i}\big\rangle_{N} \right)\,.
\end{equation}
Note that due to the occurence of the bilinear term, none of the estimators will in general be unbiased for fixed $N$.}

\section{Least-squares regression}\label{sec:leastSquares}

We now discuss the numerical discretisation of (\ref{fbsde}) and (\ref{fbsde2}). The fact that both FBSDE are decoupled implies that they can be discretised by an explicit time-stepping scheme. Specifically, we discuss two different approaches: a Monte Carlo approach that is based on a backward iteration that involves the numerical computation of conditional expectations using least-squares and that was first suggested by Gobet et al.~\cite{GobetEtal2005} and later on refined by several authors \cite{BenderDenk2007,BenderSteiner2012,GobetTurkedjiev2016}, and a deep learning method that seeks to approximate the BSDE solution $(Y,Z)$ by a neural network with a quadratic loss function, as suggested by \citet{JentzenEtal2017}. The convergence of the numerical schemes for an FBSDE with quadratic nonlinearities in the driver has been analysed by \citet{Turkedjiev2013}. 

For the ease of notation, we confine our discussion to the FBSDE (\ref{fbsde}) and then comment on the difference to (\ref{fbsde2}) whenever necessary. Thus consider the Euler-Maruyama discretisation
\begin{equation}\label{fbsdeEuler}
\begin{aligned}
\hat{X}_{n+1}  & = \hat{X}_n + \Delta t\, b(\hat{X}_n) + \sqrt{\Delta t}\,\sigma(\hat{X}_n)\xi_{n+1}\\
\hat{Y}_{n+1} & = \hat{Y}_{n} - \Delta t\, h(\hat{X}_n,\hat{Y}_n,\hat{Z}_n) + \sqrt{\Delta t}\,\hat{Z}_n\cdot\xi_{n+1}\,,
\end{aligned}
\end{equation}
 of (\ref{fbsde}) where $(\xi_i)_{i\geqslant 1}$ is an i.i.d.~sequence of normalised Gaussian random variables and $(\hat{X}_n,\hat{Y}_n,\hat{Z}_n)$ denotes the numerical discretisation of $(X_{t_{n}},Y_{t_{n}},Z_{t_{n}})$. 
 
To fix notation, we denote by $\eta=\inf\{n>0\colon \hat{X}_{n}\notin O\}$ the discrete-time approximation to  $\tau$, such that $\tau\approx \eta\Delta t$. Further let $n_{\max}$ be the maximum iteration number, with $T_{\max}/\Delta t$ where $T_{\max}$ is the maximum simulation time that should be chosen sufficiently large, so that either $Q(\eta<n_{\max})$ or $P(\eta < n_{\max})$ are close to one (say, between 0.9 and 1), depending on whether the controlled or uncontrolled forward process is simulated. 

\subsection{Parametric least-squares Monte Carlo}\label{sec:lsmc}

The least-squares Monte Carlo (LSMC) scheme is based on a parametric representation
\begin{equation}\label{VK}
V_K(x) = \sum_{k=1}^K \alpha_k\phi_k(x)\,,\quad \alpha_k\in\R\,,
\end{equation}
of the value function $V$ (or the free energy $F$) as a linear combination of finitely many basis functions $\phi_{1},\ldots,\phi_{K}\colon\R^{n}\to \R$. We assume that the $\phi_{k}$ are continuously differentiable, so that we can express the control by the gradient of $V_{K}$. 

Now let us introduce the shorthand 
\[
\bE[\cdot|\hat{X}_{n}]=\bE[\cdot|\hat{\mathcal{F}}_{n}]
\] 
for the conditional expectation \wrt  the $\sigma$-algebra $\hat{\mathcal{F}}_n = \sigma(\{\hat{B}_k: 0\leqslant k\leqslant n\})$ 
that is generated by the discrete Brownian motion $\hat{B}_n:=\sqrt{\Delta t}\sum_{i\leqslant n}\xi_i$. By definition, the continuous-time process $(X_s,Y_s,Z_s)$ is adapted to the filtration generated by $(B_r)_{0\leqslant r\leqslant s}$. \textcolor{black}{For the discretised process, this implies (cf.~Remark \ref{rem:fbsde})}
\begin{equation}\label{condExp}
\hat{Y}_n = \E\big[\hat{Y}_n|\hat{X}_n\big]
\end{equation}
so that, with (\ref{fbsdeEuler}), 
\begin{equation}\label{condExp2}
\hat{Y}_n  = \E\big[\hat{Y}_{n+1} + \Delta t \,h(\hat{X}_n,\hat{Y}_n,\hat{Z}_n)|\hat{X}_n\big]\\
\end{equation}
using that $\hat{Z}_n$ is independent of $\xi_{n+1}$. In order to compute $\hat{Y}_n$ from $\hat{Y}_{n+1}$, \textcolor{black}{it is convenient to replace $(\hat{Y_{n}},\hat{Z}_{n})$ on the right hand side by $(\hat{Y}_{n+1},\hat{Z}_{n+1})$, so that we end up with the fully explicit time stepping scheme
\begin{equation}\label{Yn}
\hat{Y}_n := \E\big[\hat{Y}_{n+1} + \Delta t \,h(\hat{X}_n,\hat{Y}_{n+1},\hat{Z}_{n+1})|\hat{X}_n\big]\,,
\end{equation}
which is equivalent to (\ref{condExp2}) up to terms of order $(\Delta t)^{2}$.}

Note that we can use the identification of $-Z$ with the control and replace $\hat{Z}_{n+1}$ in the last equation by
\begin{equation}\label{Zn}
\hat{Z}_{n+1} = \sigma(\hat{X}_{n+1})^T \nabla V_K(\hat{X}_{n+1})\,,
\end{equation}
where $V_K$ is given by the parametric ansatz (\ref{VK}).

\subsubsection{Conditional expectation}
We next address the question how to compute the conditional expectations with respect to $\hat{\mathcal{F}}_{n}$. To this end, we recall that the conditional expectation can be characterised as a best approximation in $L^{2}$:
\[
\E\big[S|\hat{X}_n\big] = \mathop{\rm argmin}_{Y\in L^2,\, \hat{\mathcal{F}}_n \textrm{-measurable}}\E[|Y-S|^2]\,.
\]
(Hence the name \emph{least-squares Monte Carlo}.)
Here measurability \wrt $\hat{\mathcal{F}}_{n}$ means that $(\hat{Y}_{n},\hat{Z}_{n})$ can be expressed as functions of $\hat{X}_{0},\,\hat{X}_{1},\ldots,\,\hat{X}_{n}$. 
In view of (\ref{YandZ}), this suggests the  approximation scheme
\begin{equation}\label{condVar}
\hat{Y}_n \approx \mathop{\rm argmin}_{Y=Y(\hat{X}_n)}\frac{1}{M}\sum_{m=1}^{M}\big|Y -  b_{n}^{(m)}\big|^2,
\end{equation}
with the shorthand 
\begin{equation}\label{leastSqb_def}
b_n^{(m)} = \hat{Y}_{n+1}^{(m)} + \Delta t\,h\big(\hat{X}^{(m)}_n,\hat{Y}^{(m)}_{n+1},\hat{Z}^{(m)}_{n+1}\big)\,.
\end{equation}
Here the superscript in parentheses is used to label the $M$ independent realisations of the forward process, $\hat{X}$, the resulting values for the backward process,
\begin{equation}\label{Yn+1}
\hat{Y}^{(m)}_{n+1} = \sum_{k=1}^K \alpha_k(t_{n+1})\phi_k\big(\hat{X}^{(m)}_{n+1}\big)\,,
\end{equation}
and the control,
\begin{equation}\label{Zn+1}
\hat{Z}^{(m)}_{n+1} = \sigma\big(\hat{X}^{(m)}_{n+1}\big)^T\sum_{k=1}^K \alpha_k(t_{n+1})\nabla\phi_k\big(\hat{X}^{(m)}_{n+1}\big)\,.
\end{equation}
At the terminal time $n_{\max}$, the data are determined by
\begin{equation}\label{Yterm}
\hat{Y}^{(m)}_{n_{\max}} = g\big(\hat{X}^{(m)}_{n_{\max}}\big)
\end{equation}
and 
\begin{equation}\label{Zterm}
\hat{Z}^{(m)}_{n_{\max}} = \sigma\big(\hat{X}^{(m)}_{n_{\max}}\big)^T\nabla g\big(\hat{X}^{(m)}_{n_{\max}}\big),
\end{equation}
where only those realisations are taken into account that have \emph{not} yet reached the boundary, i.e.~ceased to exist.   

\subsubsection{LSMC algorithm}\label{LSMC_algorithm}
The unknown coefficients $\alpha_k$ have to be computed in every iteration step which makes them functions of time (i.e. $\alpha_k=\alpha_{k,n}$), even though the value function is not explicitly time dependent. 
We call $\hat{\alpha}_n=(\alpha_{1,n},\ldots,\alpha_{K,n})$ the vector of the unknowns, so that the least-squares problem that has to be solved in the $n$-th step of the backward iteration is of the form
\begin{equation}\label{leastSq}
\hat{\alpha}_n = \mathop{\rm argmin}_{\alpha\in\R^K} \left|A_n\alpha - b_n\right|^2\,,
\end{equation}
with coefficients
\begin{equation}\label{leastSqA}
A_n = \left(\phi_k\big(\hat{X}_n^{(m)}\big)\right)_{m=1,\ldots,M;k=1,\ldots,K}\,
\end{equation}
and data
\begin{equation}\label{leastSqb}
b_n = \left(b_n^{(1)}, \ldots, b_n^{(M)}\right).
\end{equation}
Assuming that the coefficient matrix $A_n\in\R^{M\times K}$, $K\leqslant M$ defined by (\ref{leastSqA}) has maximum rank $K$, then the solution to (\ref{leastSq}) is given by
\begin{equation}\label{leastSqSol}
\hat{\alpha}_n = \left(A_n^T A^{}_n\right)^{-1}A_n^T b^{}_n\,.
\end{equation}


As has been shown by \citet{GobetEtal2005}, the thus defined scheme 
is strongly convergent of order 1/2 as $\Delta t\to 0$ and $M,K\to\infty$. Controlling the approximation quality for finite values $\Delta t, M, K$, however, requires a careful adjustment of the simulation parameters and basis functions, especially with regard to the condition number of the matrix $A_n$, and we will discuss suitable strategies to determine a good basis in Section \ref{sec:num}.

\begin{rem}
	\textcolor{black}{If an explicit representation of $\hat{Z}_{n}$ such as (\ref{Zn}) is not available, which, for example, is the case when the noise coefficient $\sigma=\sigma(x)$ is controlled too, it is possible to derive a time stepping scheme for $(\hat{Y}_{n},\hat{Z}_{n})$ in the following way:} multiplying the second equation in (\ref{fbsdeEuler}) by $\xi_{n+1}\in\R^{m}$ from the left, taking expectations and using the fact that $\hat{Y_{n}}$ is adapted,  it follows that
	\begin{equation}
	0 = \bE\!\left[\xi_{n+1}\big(\hat{Y}_{n+1} - \sqrt{\Delta t}\hat{Z}_{n}\cdot \xi_{n+1}\big)\big| \hat{X}_{n}\right]
	\end{equation}
	or, equivalently,
	\begin{equation}
	\hat{Z}_{n} = \frac{1}{\sqrt{\Delta t}} \bE\!\left[\xi_{n+1}\hat{Y}_{n+1}\big| \hat{X}_{n}\right]\,.
	\end{equation}
	Together with (\ref{Yn}) or, alternatively, with
	\begin{equation}\label{Yn2}
	\hat{Y}_n = \E\big[\hat{Y}_{n+1} + \Delta t \,h(\hat{X}_n,\hat{Y}_{n+1},\hat{Z}_{n})|\hat{X}_n\big]\,,
	\end{equation}
	we have a fully explicit scheme for $(\hat{Y}_{n},\hat{Z}_{n})$.
\end{rem}

\subsection{Deep learning based shooting method}

As an alternative we discuss a modification of the deep learning based approach that has been proposed by  \citet{JentzenEtal2017} and that is basically a clever implementation of a shooting method for two-point boundary value problems.  

The idea is to approximate $Y_{n\Delta t}$ for every $n=0,\ldots,n_{\max}-1$ by a random variable $\mathcal{Y}_{n}=\mathcal{Y}_{n}^{\vartheta}(x)$ that depends on parameters $\vartheta = (\vartheta_Y, \vartheta_Z) \in \R \times \R^p$ and the initial condition $\hat{X}_{0}=x$ and that satisfies the \emph{forward} iteration
\begin{equation}\label{deepBSDE}
\mathcal{Y}_{n+1}  = \mathcal{Y}_{n} - \Delta t\, h(\hat{X}_n,\mathcal{Y}_n,\mathcal{Z}_n) + \sqrt{\Delta t}\,\mathcal{Z}_n\cdot\xi_{n+1}\,.
\end{equation}
Here we model $\mathcal{Y}_{0} = \vartheta_Y$ with a single parameter and $\mathcal{Z}_n=\mathcal{Z}^{\vartheta_Z}(\hat{X_n})$ as a neural net approximation of $Z_{n\Delta t}$, where $\vartheta$ is chosen so as to minimise the quadratic loss function
\begin{equation}\label{deepLoss}
\ell(\vartheta) = \bE\big[|\mathcal{Y}_{\eta} - g(\hat{X}_{\eta})|^2\big]\,.
\end{equation}
The choice of the loss function (\ref{deepLoss}) is motivated by the fact that the exact FBSDE solution satisfies 
\begin{equation}
\bE\big[|Y_{\tau} - g(X_{\tau})|^2\big]=0\,.
\end{equation}
Therefore, by construction, the approximants will be adapted, with the property
\begin{equation}\label{deepYandZ}
\mathcal{Y}_{n} \approx V(\hat{X}_n)\,,\quad \mathcal{Z}_{n} \approx (\sigma^T\nabla V)(\hat{X}_n)\,,
\end{equation}
assuming that $\Delta t$ is sufficiently small, that sufficiently many training samples of $\hat{X}_{\eta}$ are available to approximate the expectation in (\ref{deepLoss}), and that the trained neural network is sufficiently rich (i.e.~that $p$ is sufficiently large). Understanding the approximation (\ref{deepYandZ}) in more detail will be a subject of future research.

\subsubsection{Stochastic gradient descent}

We define the central objects of the method and explain how to compute the optimal parameters. To this end, let $(\Omega,\cE,P)$ be our generic probability space on which the family of random variables $\xi_{n}\colon\Omega\to\R^{d}$ that appear in the BSDE (\ref{deepBSDE}) is defined. Further let  $\mathcal{Y}^{\vartheta}_n\colon \Omega\to\R$ and $\mathcal{Z}^{\vartheta}\colon \Omega\times\R^{d}\to\R^{d}$ be random fields parametrised by $\vartheta = (\vartheta_Y, \vartheta_Z)\in \R\times\R^{p}$ that satisfy (\ref{deepBSDE}). 

Letting $\hat{X}_{n}^{(1)},\,\hat{X}_{n}^{(2)},\,\ldots,\, \hat{X}_{n}^{(M)}$ denote independent and identically distributed realisations of the forward dynamics, an unbiased estimator of (\ref{deepLoss}) is given by  
\begin{equation}\label{estimateLoss}
\hat{\ell}(\vartheta) = \frac{1}{M}\sum_{m=1}^M |\mathcal{Y}^{(m)}_{\eta} - g(\hat{X}^{(m)}_{\eta})|^2\,.
\end{equation}
We suppose that the random function $\hat{\ell}$ is differentiable in $\vartheta$, which allows us to minimise the loss by doing  stochastic gradient descent 
\begin{equation}\label{sgd}
\vartheta^{(i+1)} = \vartheta^{(i)} - \gamma^{(i)} \nabla\hat{\ell}(\vartheta^{(i)})\,,
\end{equation}
where the step size or learning rate $\gamma^{(i)}\to 0$ is decreasing and satisfies the usual divergence condition
\begin{equation}
\sum_{m=1}^{\infty}\gamma^{(i)}= \infty\,.
\end{equation}
 By construction $\nabla\hat{\ell}(\vartheta^{(i)})$ is an unbiased estimator of the exact gradient $\nabla\ell(\vartheta^{(i)})$, when conditioned on the current iterate $\vartheta^{(i)}$. 
The thus described algorithm is the most basic one, but it can be augmented in various ways, e.g. by using adaptive moment estimation.\cite{adam}

\section{Illustrative examples}\label{sec:num}

We consider three different toy examples, one of which involves pure drift-less Brownian motion and a random stopping time with a non-trivial terminal condition, one an Ornstein-Uhlenbeck process on a finite (deterministic) time horizon and another one a metastable overdamped Langevin dynamics. \textcolor{black}{(The code can be found online at \url{https://github.com/lorenzrichter/BSDE}.)}

\subsection{Committor equation}

\begin{figure}
\includegraphics[width=0.45\textwidth]{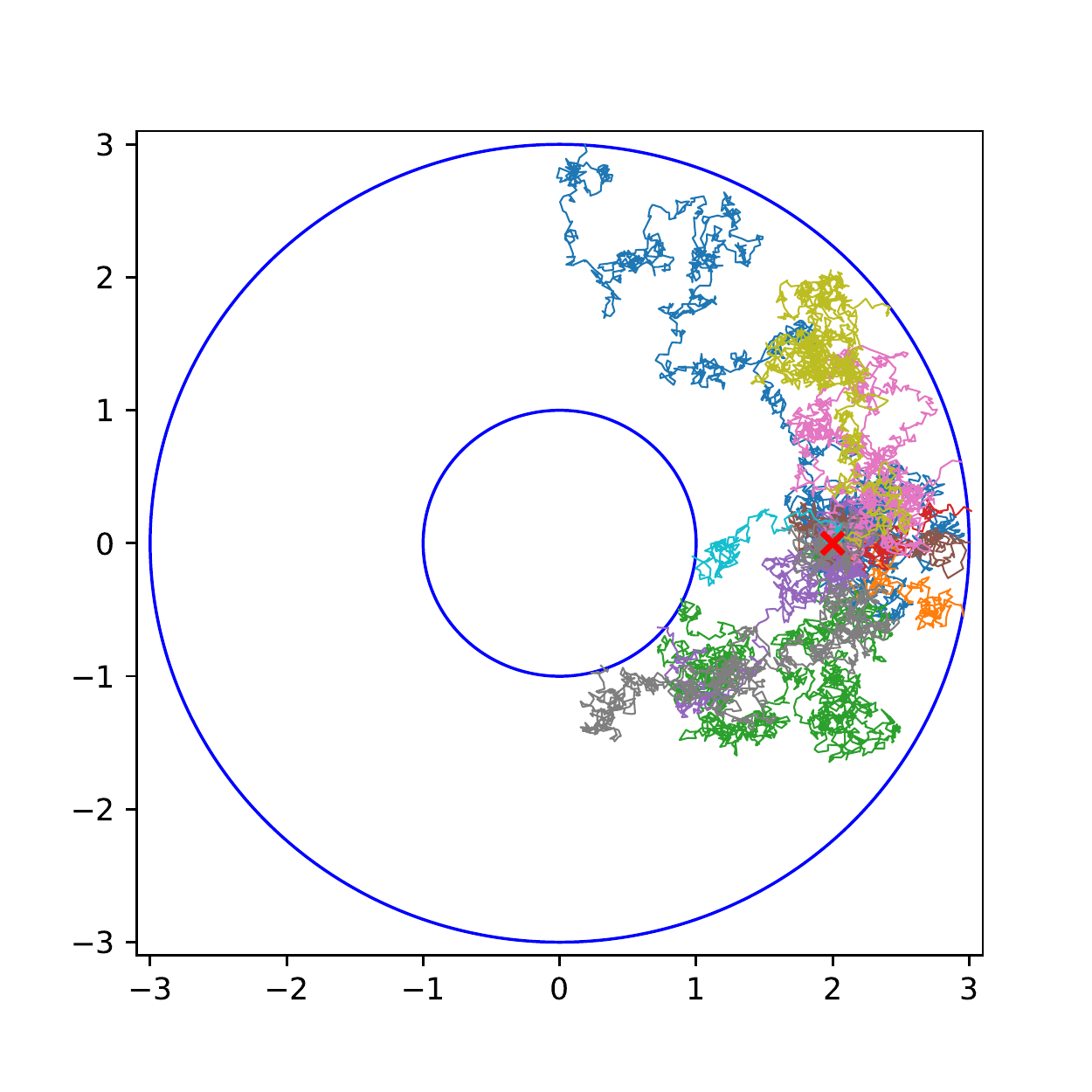}
\caption{Typical realisations of the 2-dimensional committor problem for $a=1$ and $c=3$.}
\label{fig:commit2dReal}
\end{figure}

Let $X_{t}=x + B_{t}$, $t\ge 0$ be a Brownian motion in $\R^{d}$, and consider the open and bounded set
\[
O = \{x\in\R^{d}\colon a < |x| < c \}\subset\R^{d}\,.
\]
We define the sets $A=\{x\in\R^{d}\colon |x|\le a\}$ and $C=\{x\in\R^{d}\colon|x|\ge c\}$ for $a<c$ and denote by 
\[
\tau= \inf\{t>0\colon X_t\in A\cup C\}
\] 
the first hitting time of $A\cup C$ (see Figure \ref{fig:commit2dReal}). Letting the stopping times $\tau_{A}, \tau_{C}$ be the first hitting times of $A,C$, the committor probability from $A$ to $C$ as a function of the initial condition $x$ is given by the function
\[
h(x) = P_x(\tau_C<\tau_A) = \bE_x[\one_{C}(X_\tau)]. 
\] 
The committor function solves the elliptic boundary value problem on $\overline{O}=O\cup\partial A\cup\partial C$: 
\begin{equation}
Lh = 0\,,\quad h|_{\partial A}= 0\,, \; h|_{\partial C}=1\,,
\end{equation}
with $L = \Delta/2$ being infinitesimal generator of $X$. By the spherical symmetry of the problem, the committor is a function of $r=|x|$ only. Using that the Laplacian of a function $h=h(r)$ can be recast as 
\begin{equation}
\Delta h(r) = h''(r) + \frac{d-1}{r}h'(r)\,,
\end{equation}
the committor equation can be integrated twice to yield the explicit solution
\begin{equation}\label{hd}
h_d(r) = \frac{a^2 - r^{2-d} a^d}{a^2 - c^{2-d} a^d}\,,\; d\in\N
\end{equation}
For $d=1$, the function is linear. For $d=2$, both enumerator and denominator are zero, but $u$ has a well-defined limit (computable by l'Hopital's rule), namely,
\begin{equation}\label{h2}
h_2(r) = \frac{\log(a) - \log(r)}{\log(a) - \log(c)}\,.
\end{equation}
For $d\to\infty$, the solution $h_d$ converges to the constant 1 on $(a,c]$ and zero for $r=a$.

\begin{figure}
\includegraphics[width=0.45\textwidth]{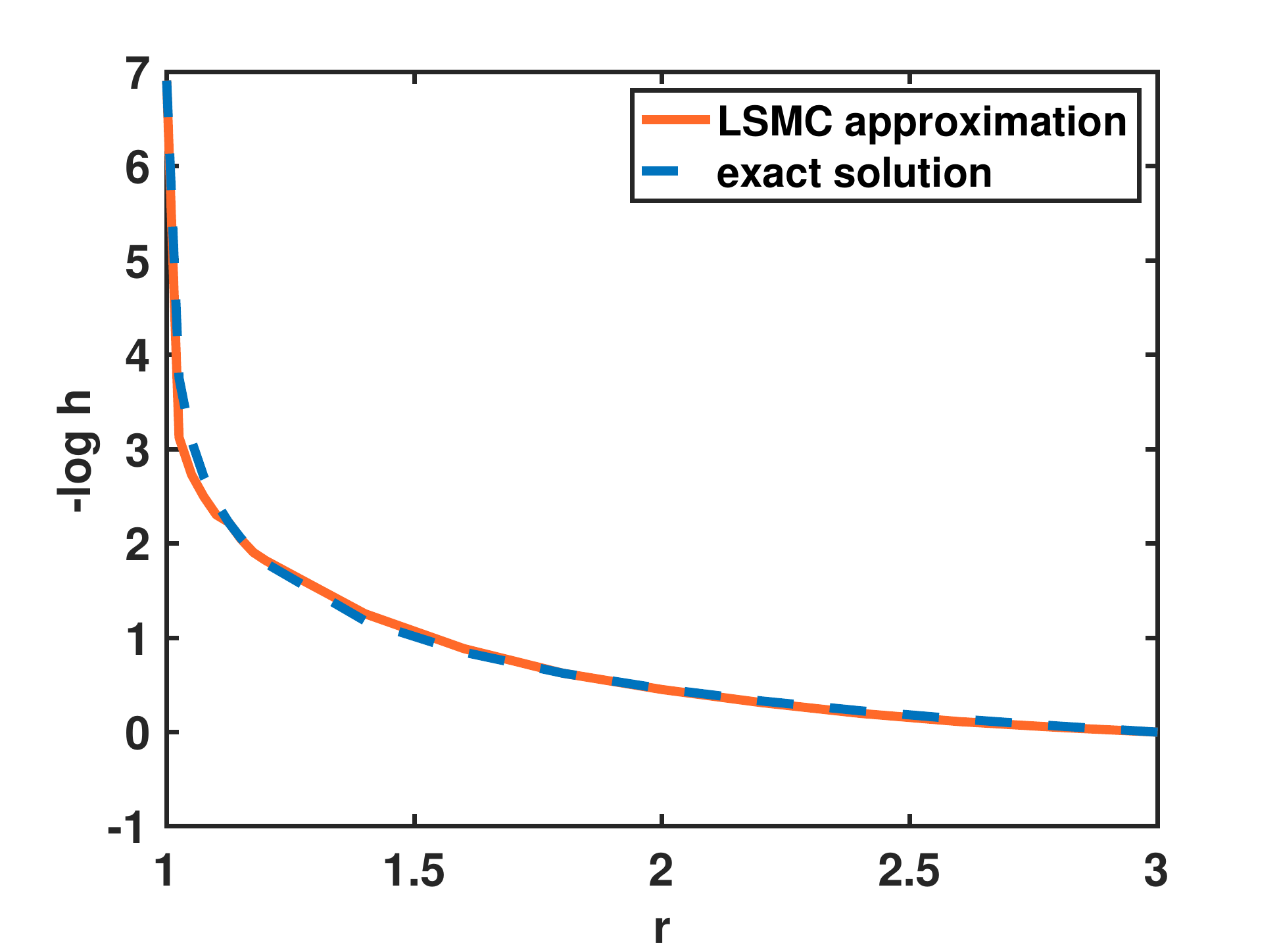}\\
\includegraphics[width=0.45\textwidth]{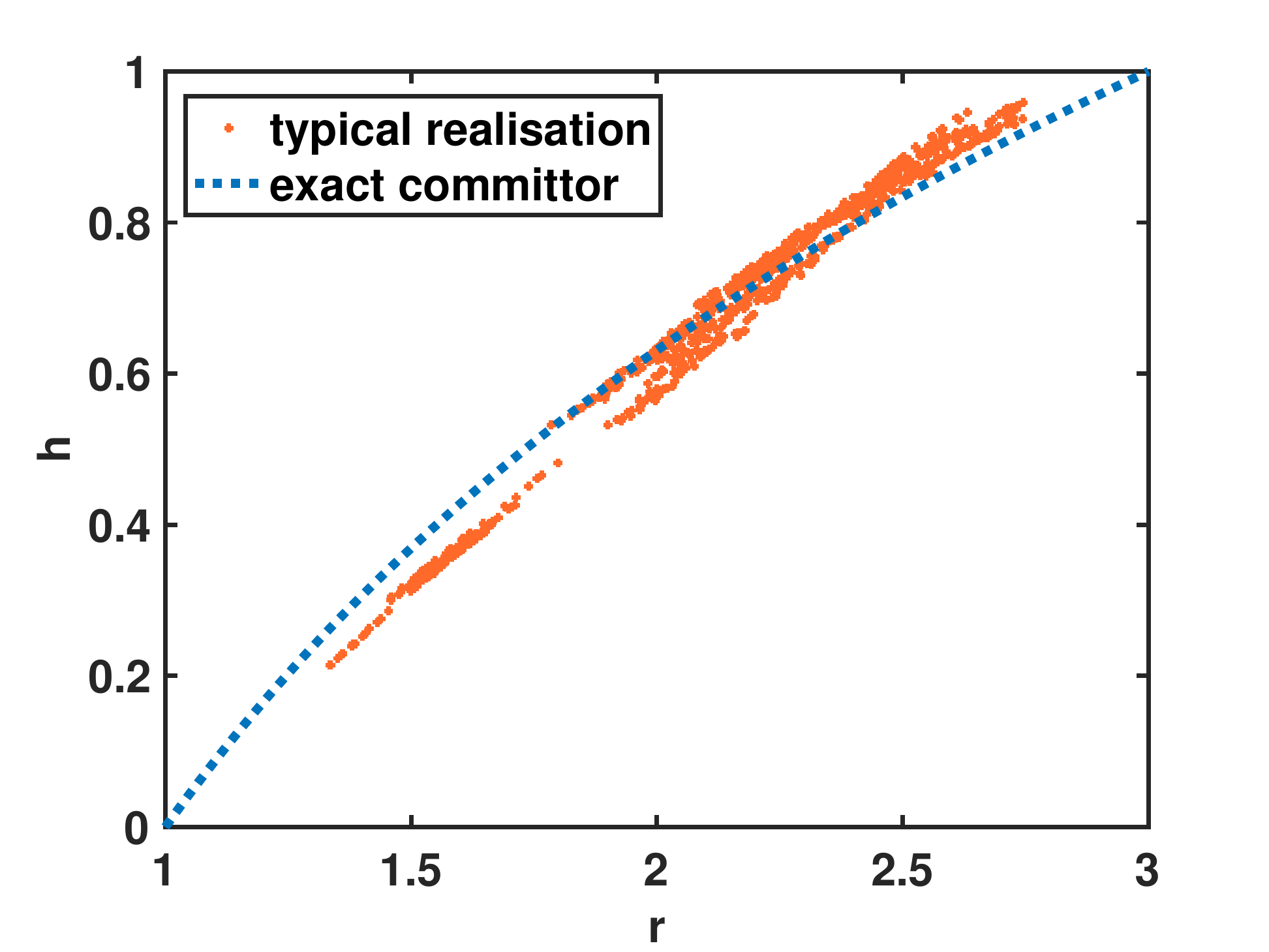}
\caption{LSMC approximation of the value function (upper panel) and the committor (lower panel).  \textcolor{black}{The lower panel also shows  $\exp(-(\hat{Y}_{n})_{n\ge 0})$ for a single realisation $(\hat{Y}_{n}^{(m)})_{n\ge 0}$, as a function of the absolute value process $(|\hat{X}_{n}^{(m)}|)_{n\ge 0}$.}}
\label{fig:Commit2d}
\end{figure}


The associated FBSDE has vanishing running cost, $f=0$, and non-smooth terminal cost, $g=-\log \one_{C}$. The numerical solution can be computed after an appropriate regularisation of the logarithm. We choose $g^{\eps} = -\log(\one_{C}+\eps)$ for an arbitrary $\eps>0$. The solution to the thus regularised BSDE is related to (\ref{hd}) by
\begin{equation}\label{u2}
Y^{\eps}_{x} = -\log (h_{d} + \eps)\,.
\end{equation}
We apply the LSMC algorithm described in Section \ref{sec:lsmc} where we run $M$ independent forward realisations of the discrete Brownian motion 
\begin{equation}
\hat{X}_{n+1}  = \hat{X}_n + \sqrt{\Delta t}\,\xi_{n+1}\,,\quad \hat{X}_{0}=x\,,
\end{equation}
with $\Delta t=0.005$. The maximum length of the trajectories, $T_{\max}=\Delta t n_{\max}$, is set equal to 0.5 times the mean first exit time from a $d$-dimensional hypersphere of radius $c$ that is given by $(c^{2}-|x|^{2})/d$, so that most, but not all trajectories have exited from $O$ by time $T_{\max}$. In order to solve the corresponding BSDE
\begin{equation}
\hat{Y}^{\eps}_{n+1} = \hat{Y}^{\eps}_{n} + \frac{\Delta t}{2}|\hat{Z}^{\eps}_n|^{2} + \sqrt{\Delta t}\,\hat{Z}^{\eps}_n\cdot\xi_{n+1}\,,
\end{equation}
with terminal condition
\begin{equation}
\hat{Y}^{\eps}_{n_{\max}} = g^{\eps}(\hat{X}_{n_{\max}})\,,
\end{equation}
an adaptive basis of smooth ansatz functions $\phi_{k}$, $k=1,\ldots, K$ is constructed in the following way: For every $n\in\{1,\ldots,n_{\max}\}$ we compute the empirical mean $\bar{X}_{n}$ over the $M$ active realisations of the forward process $\hat{X}_{n}$, and we define Gaussian ansatz functions 
\begin{equation}\label{gaussian_ansatz}
\phi_{k}(x)=\mathcal{N}(m_{k},v^{2})
\end{equation} 
with constant variance $v^{2}=1$ and mean  
\begin{equation}
m_{k}(n) = \bar{X}_{n} - \delta + \frac{2\delta}{K-1} (k-1)\,,
\end{equation}
where $\delta=v$ is kept fixed throughout the simulation. (In some cases, it may pay off to set $\delta$ equal to the empirical standard deviation of $\hat{X}_{n}$ for every $n$.) \textcolor{black}{The last equation admits a straightforward generalisation to the multidimensional case if it is interpreted component-wise. Another strategy in the multidimensional case that has proven useful is to place the basis functions so that, for each component, their means or centre points are equidistributed between the minimum and maximum values of the forward trajectories.}

The upper panel of Figure \ref{fig:Commit2d} shows the value function (free energy) $-\log \hat{Y}_{0} \approx V(r)$ as a function of the initial radius $r=|x|$ for the parameters $a=1$ and $c=3$; the simulation parameters were set to $K=5$ (number of basis functions), $M=1000$ (number of realisations), $\delta=1$ (spreading of basis functions), and $v=2$ (variance of Gaussian basis function). Note that even though we use globally supported radial basis functions to represent the solution of the backward SDE, the approximation of $V(|x|)$ is meaningful only in a small neighbourhood of the initial value $\hat{X}_{0}=x$. Nevertheless it is possible to obtain a coarse representation of the value function or the committor function along single realisations, using that $\hat{Y}_{n} \approx V(|\hat{X}_{n}|)$, by definition of the backward process (see lower panel of Figure \ref{fig:Commit2d}).

We tested the LSMC algorithm for a 10-dimensional example, with a single initial value $x\in\R^{10}$ with $|x|=1.5$, $a=1$, $c=2$. Figure \ref{fig:committor10d} illustrates the bias coming from the fact that all forward realisations have finite length $T_{\max}=n_{\max}\Delta t$. The bias can be reduced by increasing $n_{\max}$, at the expense of increasing the computational overhead and the variance of the estimator as the variance of the LSMC coefficients (\ref{leastSqSol}) increases when the number of alive (i.e.~non-stopped) realisations decreases. Note that the relative error in this case is below $1\%$.    

The deep learning based algorithm did not produce any reproducible results on the committor example. 

\begin{figure}
\includegraphics[width=0.45\textwidth]{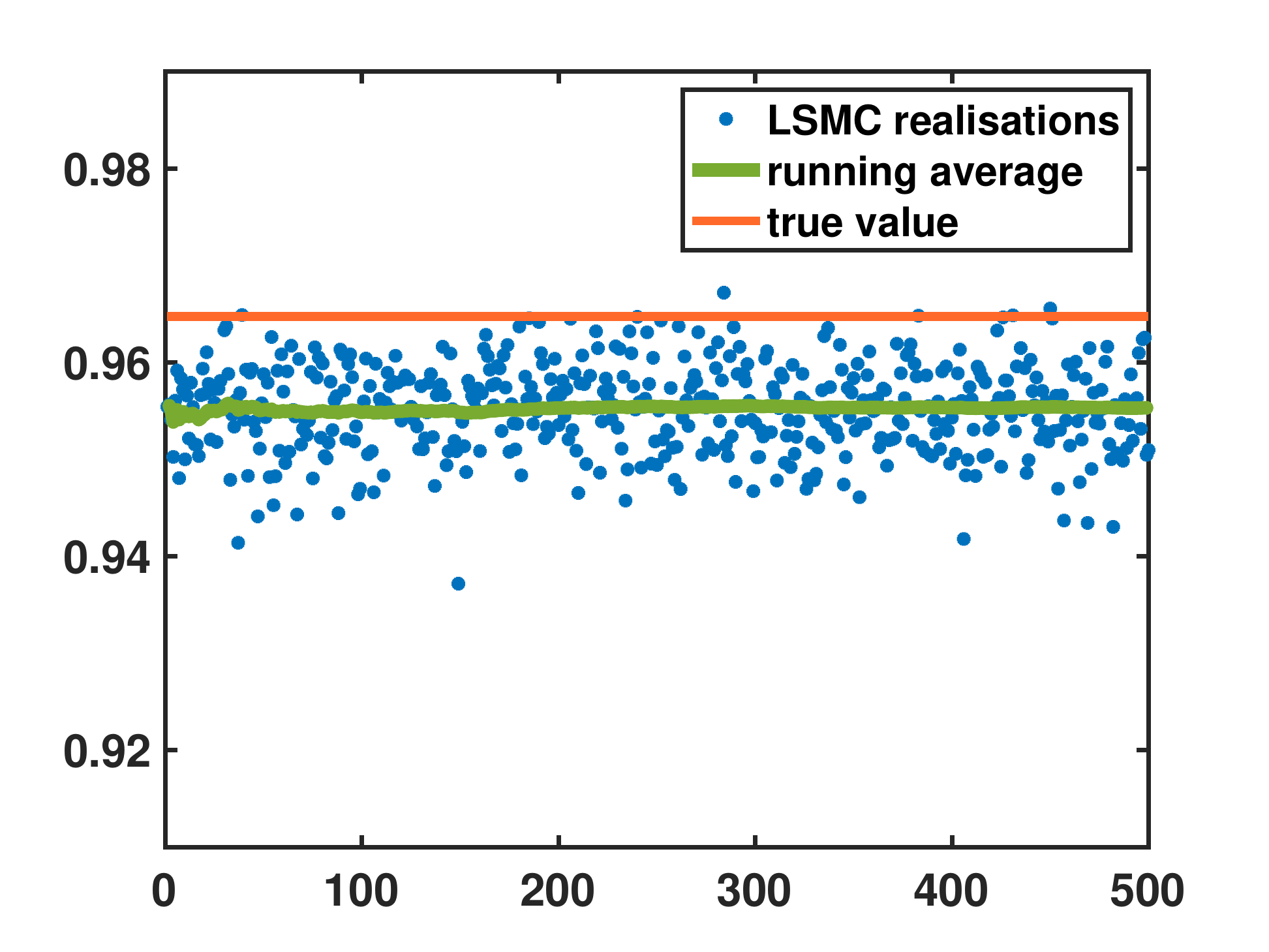}
\caption{LSMC approximation of the 10-dimensional value function and the resulting committor probability.}
\label{fig:committor10d}
\end{figure}

\subsection{Ornstein-Uhlenbeck process}

 An example for which the deep learning based shooting method is applicable is when the stopping time is deterministic (cf.~Remark \ref{rem:finiteTime}). Specifically, we consider the computation of the conditional expectation
 \begin{equation}
 \E[\exp(-\alpha X_T) | X_0 = x]
 \end{equation}
under the one-dimensional linear dynamics 
  \begin{equation}
d X_t = (\mu-X_t ) dt + \sigma d B_t,
 \end{equation}
where we assume $\alpha, \mu, \sigma \in \R$ to be time-independent. Since the transition probability density of the process $X_T^{x,t}:=(X_T|X_t = x)$ is explicitly known for all times, namely
\begin{equation}
X_T^{x,t} \sim \mathcal{N}\left((x-\mu) e^{t - T} + \mu, \frac{\sigma^2}{2}\left(1 - e^{2(t-T)}\right)\right),
\end{equation}
it is straightforward to compute the corresponding (now explicitly time-dependent) value function
\begin{equation}
V(x, t) = \alpha(  (x-\mu) e^{t - T} + \mu) - \frac{\alpha^2 \sigma^2}{4} \left(1 - e^{2  (t - T)}\right)
\end{equation}
and therefore the optimal control
\begin{equation}\label{optimalControlOU}
u^*(x, t) = -\sigma \alpha e^{t - T},
\end{equation}
which remarkably does not depend on $x$. \par\bigskip

We apply the shooting method with values $x= 0, \alpha =1, \mu = 0, \sigma = \sqrt{2}, T = 5$ and $\Delta t = 0.05$ by identifying the terminal costs $g(x) = \alpha x$. Contrary to the explanation above, which aims at a hitting time example, here the control is explicitly time-dependent and we therefore need time-dependent approximations $\mathcal{Z}_n=\mathcal{Z}_n^{\vartheta_{Z_n}}(\hat{X}_n)$. For those we choose multiple fully connected neural networks $\mathcal{Z}^{\vartheta_{Z_n}}_n:\R \to \R$, each with one hidden layer, batch normalisation and $p = 105$ parameters, that are supposed to approximate $\hat{Z}_n$ for $n = 1, \dots, N$, as well as the single parameter $\vartheta_Y \in \R$ that shall approximate $\hat{Y}_0$. \par\bigskip
Additionally to considering only uncontrolled forward trajectories, we add the control $v_s = -Z_s$ as described in \eqref{fbsde2}-\eqref{driver2}. More precisely, we use the approximation of the optimal control from a previous iteration step
\begin{equation}
\mathcal{Z}_n^{\vartheta_{Z_n}^{(i)}} = \mathcal{Z}_n^{\vartheta_{Z_n}^{(i)}}(\hat{X}_n)
\end{equation}
when simulating the forward trajectories for the $(i+1)$-th gradient step, i.e. we simulate the two processes
\begin{equation*}
\begin{aligned}
\hat{X}_{n+1}  & = \hat{X}_n + \Delta t\, \left(-\hat{X}_n - \sigma\mathcal{Z}_n^{\vartheta_{Z_n}^{(i)}}(\hat{X}_n)\right) + \sigma\sqrt{\Delta t} \,\xi_{n+1}\\
\hat{Y}_{n+1} & = \hat{Y}_{n} -  \frac{\Delta t}{2}\left( \mathcal{Z}_n^{\vartheta_{Z_n}^{(i)}}(\hat{X}_n)\right)^2 + \sqrt{\Delta t}\, \mathcal{Z}_n^{\vartheta_{Z_n}^{(i)}}(\hat{X}_n)\xi_{n+1}
\end{aligned}
\end{equation*}
with $\hat{X}_0 = x, \hat{Y}_0 = \vartheta_Z^{(i)}.$
In our simulations we observe that the added control in the forward trajectories can accelerate the convergence of the loss function \eqref{estimateLoss} as shown in Figure \ref{shooting_loss_log}. We are able to drive the loss to zero with the Adam optimiser \cite{adam} and a batch size $M = 50$. In the plots of Figure \ref{NNapproxValueFunction} we see a good agreement of the true optimal control function \eqref{optimalControlOU} and its neural network approximation. 

\begin{figure}
\includegraphics[width=0.45\textwidth]{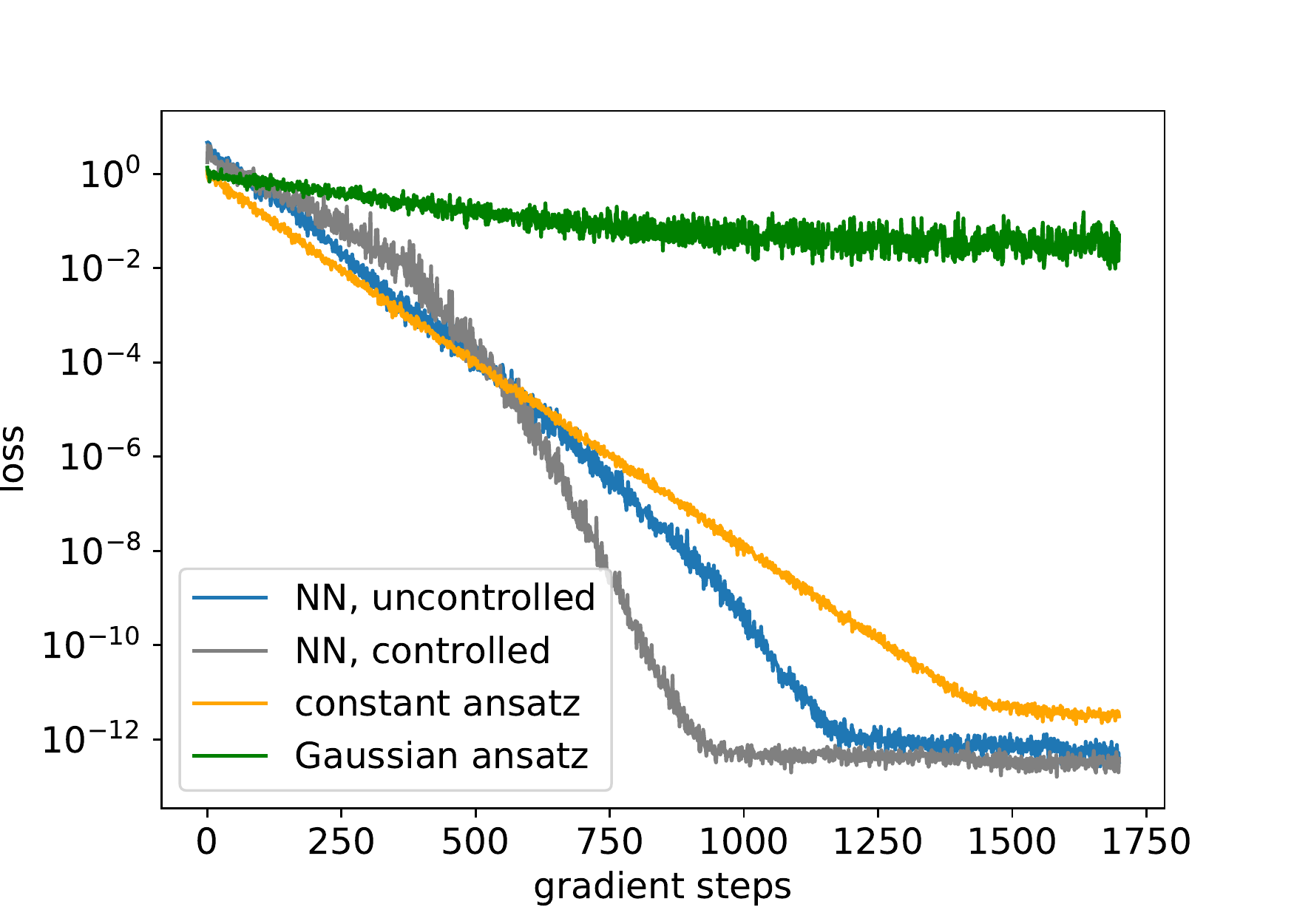}
\caption{Loss functions in the shooting method for the OU example for different choices of function classes. For the neural network approximation we compare the controlled and the uncontrolled forward trajectories.}
\label{shooting_loss_log}
\end{figure}

\textcolor{black}{In the shooting method, one can of course also use ansatz functions for the approximation of $\hat{Z}_n$, namely
\begin{equation}
\mathcal{Z}^{\vartheta_{Z_n}}_n = \sum_{k=1}^K \vartheta_{Z_n} \phi_k(\cdot),
\end{equation}
where now $\vartheta_{Z_n} \in \R$. For the Ornstein-Uhlenbeck example, we compare the previous deep learning attempt with choosing two different sets of ansatz functions, once equidistant Gaussians as in \eqref{gaussian_ansatz} and once the ``correct'' basis function $\phi(x) = 1$, which we identify due to the knowledge of the exact optimal control \eqref{optimalControlOU}. In both cases the gradient of the loss \eqref{estimateLoss} with respect to $\vartheta$ can be computed analytically and stochastic gradient descent can be performed as described above. In Figures \ref{shooting_loss_log} and \ref{NNapproxValueFunction} we see that both attempts yield reasonable results, however, when using Gaussians we are not able to drive the loss very close to zero. For a comparison, we additionally approximate this toy example with the LSMC attempt, choosing the same parameters as in the shooting method, and realise that this method is less robust with respect to the choice of ansatz functions and the time discretisation of the stochastic process (cf. Figure \ref{NNapproxValueFunction}).}

\textcolor{black}{In practice and in particular in higher dimensions it is of course much more difficult to choose ansatz functions appropriately and a priori bounds for the approximation   are not available.} The application of neural networks to higher dimensional processes on the other hand is straightforward, however, the optimisation can become more difficult especially if the dimensions strongly interact. Particularly interesting will be the study of the shooting method in the context of metastable processes.

\begin{figure}
\includegraphics[width=0.45\textwidth]{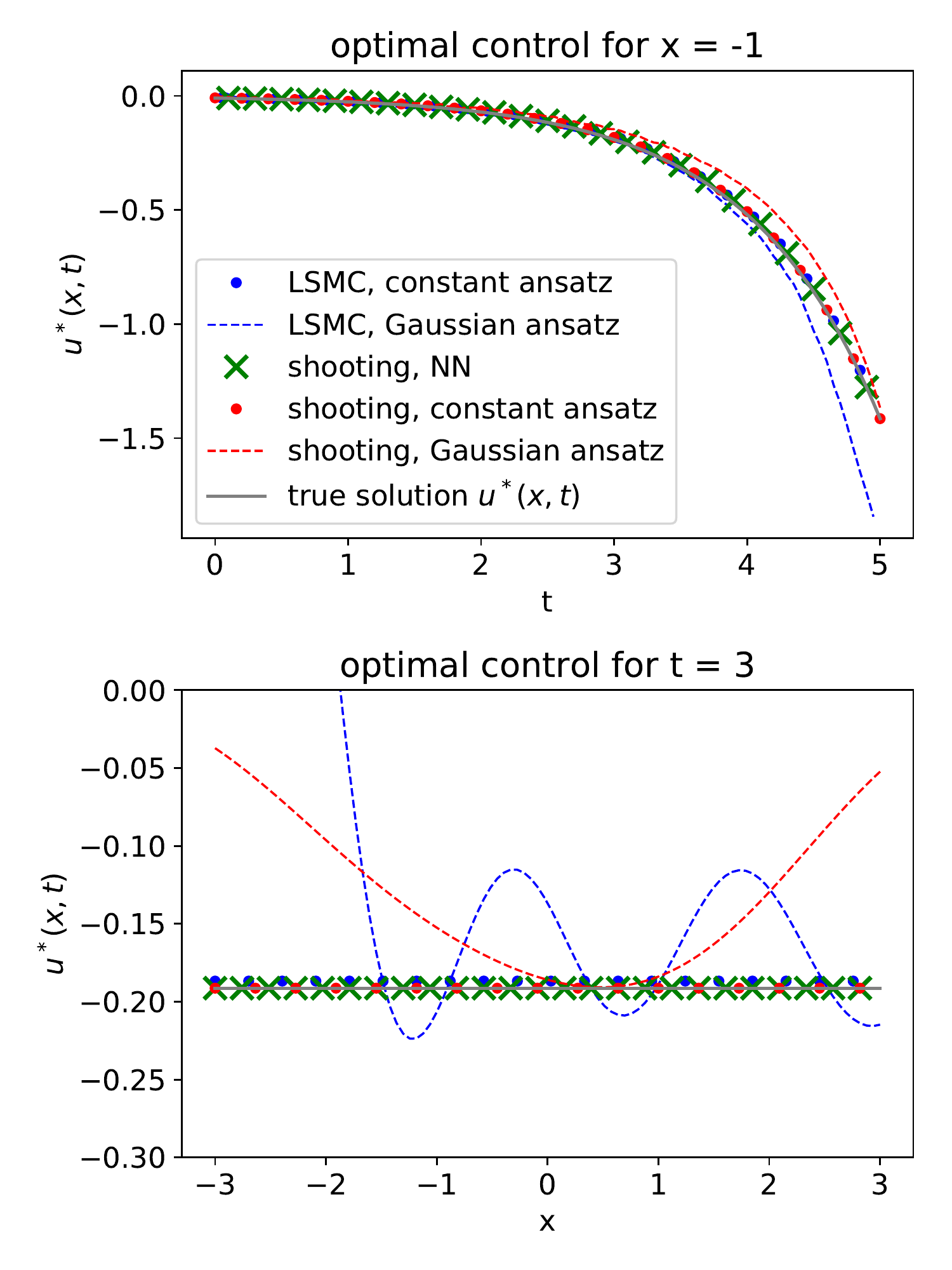}
\caption{Approximations of the optimal control for the OU example with shooting and the LSMC methods using different ansatz functions (upper panel: fixed $x$; lower panel: fixed $t$).}
\label{NNapproxValueFunction}
\end{figure}

\subsection{\textcolor{black}{Double-well potential}}

\textcolor{black}{As an example of a rare event we consider computing the probability of leaving a metastable set before time $T$, 
\begin{equation*}
\psi(x, t) = P(\tau_O < T | X_t = x),
\end{equation*}
where the dynamics is given by the Langevin equation
\begin{equation}
d X_t = -\nabla U(X_t) dt + \sigma d W_t
\end{equation}
with a potential $U(x) = (x^2-1)^2$ and a random stopping time $\tau_O = \inf \{t > 0 : X_t \notin O \}, O = (\infty, 0)$. We recall that leaving a metastable set scales exponentially with the energy barrier $\Delta U$ and the inverse of the diffusion coefficient $\sigma$ by Kramers law, namely 
\begin{equation}
\lim_{\sigma \to 0}\sigma^{2} \log \bE[\tau_O] = 2\Delta U.
\end{equation}
The overall stopping time is defined by $\tau = \min\{\tau_O, T\}$. Referring to the notation in $\eqref{Wfunc}$ this corresponds to choosing $f(x) = 0$ and $g(x) = -\log(\mathbbm{1}_{\partial O}(x))$ and since the latter expression is difficult to handle numerically we consider the reguralized problem by taking $g^\epsilon(x) = -\log(\mathbbm{1}_{\partial O}(x) + \epsilon)$ for a small $\epsilon > 0$ and note that $\psi(x,t) = \psi^\epsilon(x,t) -\epsilon$ and $V(x, t) = - \log\left(\exp\left(-V^\epsilon(x,t)\right)-\epsilon\right)$. We also note that the choice of $\epsilon$ can have a significant effect on the corresponding optimal control as illustrated in Figure \ref{tilted_potentials_epsilon_dependence} for the choice of $\sigma = 0.2$.}

\begin{figure}
\includegraphics[width=0.35\textwidth]{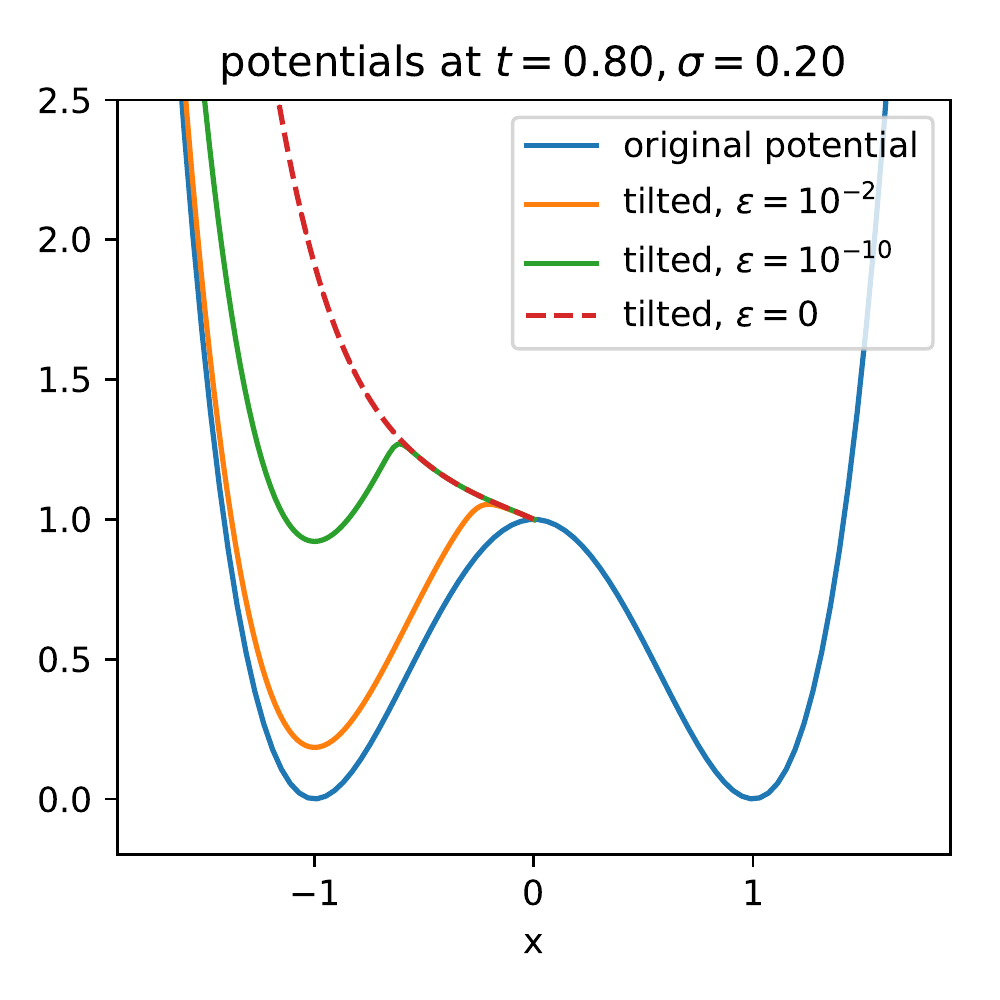}
\caption{The original double-well potential for fixed $t$ and its optimally tilted versions for different regularization values $\epsilon$.}
\label{tilted_potentials_epsilon_dependence}
\end{figure}

\begin{figure}
\includegraphics[width=0.45\textwidth]{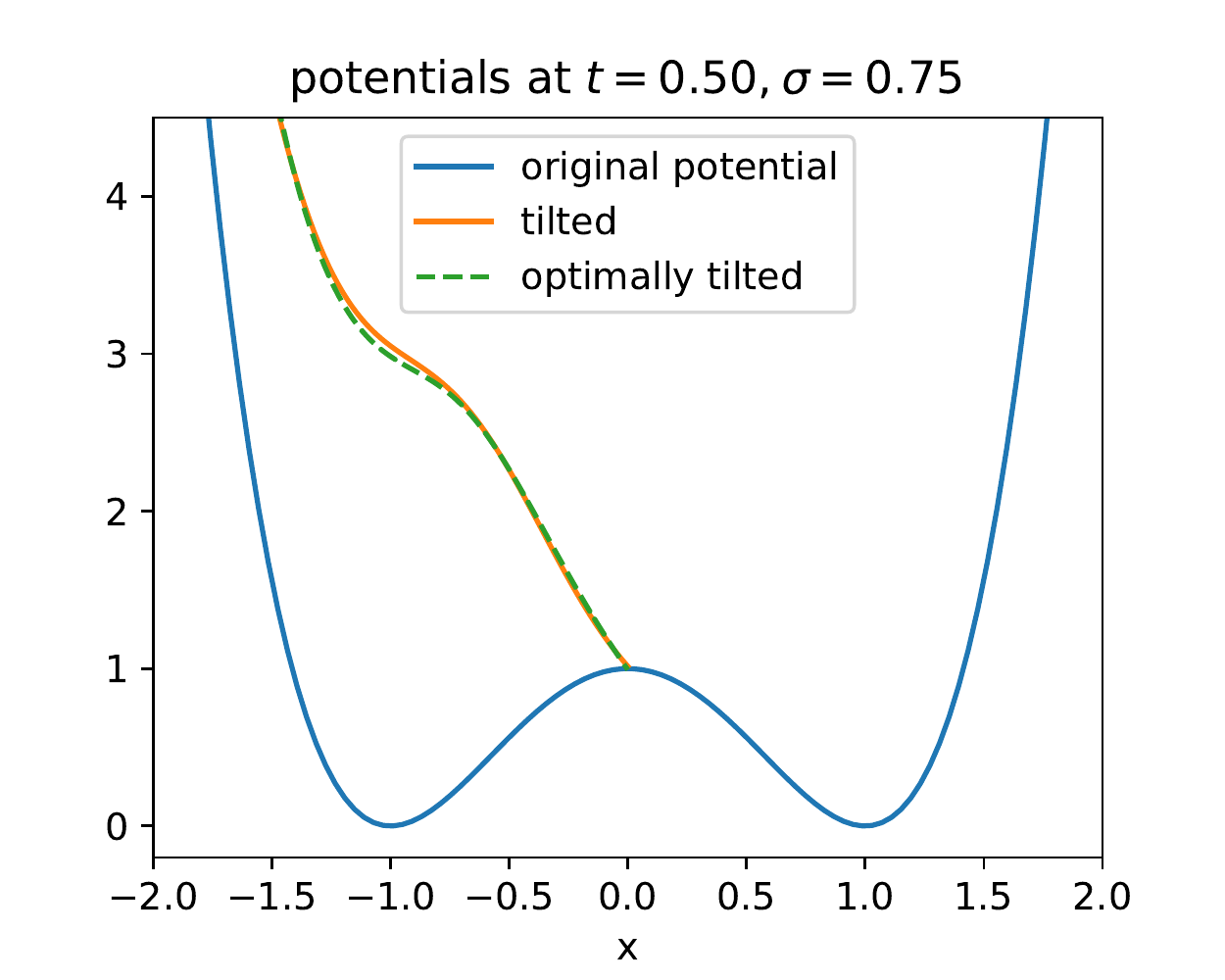} \\

\includegraphics[width=0.45\textwidth]{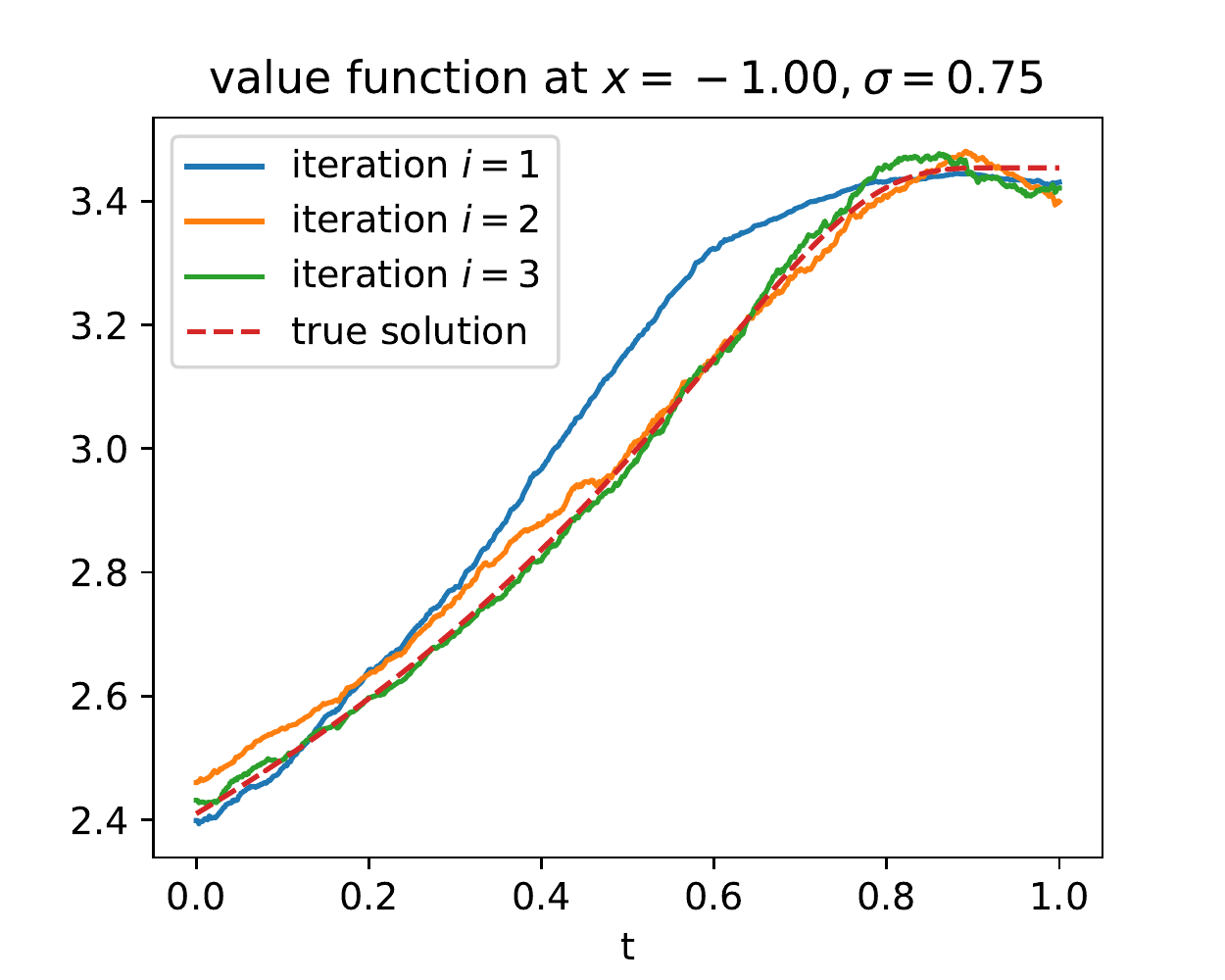} 

\caption{Top: The original double-well potential and its two tilted versions for foxed $t$ (exact optimal potential and its numerical approximation). Bottom: The approximations of the value functions for fixed $x$ with the iterated LSMC algorithm.}

\label{tilted_potentials}
\end{figure}

\textcolor{black}{By the Feynman-Kac theorem (see e.g. \citet[Thm.~8.2.1]{oeksendal2003}), the function $\psi(x, t)$ fullfills the linear parabolic evolution equation 
\begin{equation}
\label{doublewellPDE}
\left(\frac{\partial}{\partial t} + L\right) \psi(x, t) = 0, \qquad (x, t) \in O \times [0, T)
\end{equation}
with the boundary conditions
\begin{align}
\begin{split}
\psi(0, t) &= 1, \quad t \in [0, T), \\
\psi(x, T) &= 0, \quad x \in O.
\end{split}
\end{align}
We numerically approach this problem by using the LSMC algorithm explained in section \ref{LSMC_algorithm}, which we additionally iterate by using a previously found approximation as a control variate as explained in \eqref{fbsde2}-\eqref{driver2}. More precisely, after the first iteration, LSMC provides approximations for $\hat{Y}_n$, $\hat{Z}_n$ for $0 \le n \le N$, and we can use $-\hat{Z}_n$, corresponding to the optimal control, as an additional drift in the forward process to run LSMC once again and repeat until convergence. As a small modification to the above described algorithm we choose random initial points $\hat{X}_0 \sim \text{Unif}(-1.5, 0)$, which make the algorithm more stable since in particular the matrix inversion in \eqref{leastSqSol} is easier if trajectories are more spread out.}

\textcolor{black}{In our simulation, we choose $K = 5$ equidistant Gaussian functions $\phi_k(x)$ as in \eqref{gaussian_ansatz} and let $\epsilon = 0.01, T = 1, \Delta t = 0.001, \sigma = 0.75, K = 1000$. A reference solution is computed by a numerical discretization of \eqref{doublewellPDE}. In the bottom panel of Figure \ref{tilted_potentials} we see that after the second iteration we get quite close to the true value function, however, we have no guarantee for such a behavior and depending on $\sigma$ we have observed that the stability of the algorithm crucially depends on the clever choice of ansatz functions and a good initial guess of a drift in the forward process. Convergence analysis of the iteration procedure is a question for further research.}

\textcolor{black}{As an alternative strategy for computing the rare event probabilities that we are after, which is also suitable in the case where the value function approximation does not  seem to converge, one can resort to importance sampling as an additional step: The LSMC algorithm provides an approximation of the control as in \eqref{Zn+1} and we can use this---even if potentially suboptimal---in a Girsanov reweighting such as in \eqref{girsanov}. We illustrate this for $\sigma = 0.5$, for which the value function approximation itself did not yield satisfactory results. In Table \ref{tab:well2prob} we compare the importance sampling approach to naive Monte Carlo, where one does not add any drift to the forward trajectories. Here the true value is 
\[
\psi(-1, 0) = \mathbb{P}(\tau < T|X_{0}=-1) = \num{2.62e-4}\,,
\]
and we realize that the importance sampling approach brings a significant reduction of the relative error by, roughly, a factor of $20$, as a consequence of which the amount of samples needed in order to reach a given accuracy is reduced by a factor of $400$.}

\begin{table}
\textcolor{black}{
  \begin{tabular}{ | l | c | c | c |}
    \hline
     & estimate & relative error & trajectories hit\\ \hline
     \parbox[c]{1cm}{MC} & \num{2.42e-04} & 61.08 & 0.02 \% \\ \hline
    \parbox[c]{1cm}{IS} & \num{2.54e-04} & 2.76 & 68.15 \% \\
    \hline
  \end{tabular}
  \caption{Hitting probabilities: Comparison of brute-force Monte-Carlo (MC) and importance sampling (IS) using a rough FBSDE approximation of the optimal control.}\label{tab:well2prob}}
\end{table}

\section{Conclusions}\label{sec:conclude}

We have given a proof of concept that it is possible compute the optimal change of measure for rare event simulation problems with deterministic or random stopping time by solving an associated stochastic optimal control problem. The latter can be recast as a forward-backward stochastic differential equation (FBSDE) that has a nice interpretation in terms of control variates. The FBSDE can be solved by least-squares regression, and we have tested two numerical schemes: a least-squares Monte Carlo algorithm that uses predefined basis functions to represent the solution of the optimal control problem and that can be applied to---potentially high-dimensional---problems with random stopping time and non-smooth terminal cost, and a deep learning based shooting method that can be applied to systems with deterministic finite time horizon. \textcolor{black}{Let us stress that both algorithms can be combined with each other, but despite of their obvious appeal, none of the methods presented should be considered as a black box algorithm that works without any a priori knowledge about the system; a careful choice of the basis functions or the hyperparameters is crucial for the variational problems  to converge. Therefore future research ought to address these questions as well as the generalisation of the deep learning algorithm to problems with random stopping time.}     

\section*{Acknowledgement} 
This work was partially funded by the Deutsche Forschungsgemeinschaft (DFG) under the grant DFG-SFB 1114 ``Scaling Cascades in Complex Systems''.

\appendix

\section{Conditioning and Doob's $h$-transform}\label{sec:doob}

The optimal change of measure that minimises the variance of an importance sampling estimator can be interpreted as a conditional probability under rather general assumptions. Specifically, let $X$ be a Markov process in $\R^{d}$ with infinitesimal generator $L$. Introducing the shorthand $P_{x}(\cdot)=P(\cdot|X_{0}=x)$, we define the function 
\begin{equation}\label{h}
h(x) = P_x(X_{\tau}\in C)\,,
\end{equation}
which is only a slight variation of the formula (\ref{p}), in that all paths start at $X_0=x$. 
Then, for any sufficiently small $s>0$, it follows by the Markov property of $X$ that 
\begin{equation*}
\begin{aligned}
h(x) & = \int P_{x}(X_{\tau}\in C|X_{s}=y)\, dP_{x}(X_{s}=y)\\
& = \int P_{y}(X_{\tau}\in C)\, dP_{x}(X_{s}=y)\\
& = \E_{x}[h(X_{s})]\,,
\end{aligned}
\end{equation*}
where $\E_{x}[\cdot]=\E[\cdot|X_{0}=x]$ denotes the expectation over all paths of $X$ starting at $X_0=x$. As a consequence, 
\begin{equation}\label{harmonic}
(L h)(x) = \lim_{s\searrow 0}\frac{1}{s}\left(\E_{x}[h(X_{s})]-h(x)\right) = 0\,,
\end{equation}
which implies that $h$ is harmonic. 

For simplicity, we suppose that the transition kernel of $X$ has a smooth and strictly positive transition probability $p_{s}(x,\cdot)$ on $\R^{d}$ for any $s> 0$, with
\begin{equation*}
P_{x}(X_{s}\in A) = \int_{A} p_{s}(x,y)\,dy\,,\quad A\subset\R^{d}\,,
\end{equation*}
and we define 
\begin{equation*}
p^{h}_{s}(x,y) = p_{s}(x,y)\frac{h(y)}{h(x)}\,,\quad x,y\in\R^{d}\,.
\end{equation*}
Then, since $h$ is harmonic, 
\begin{equation*}
p_{s}^{h}\ge 0 \quad\text{ and }\quad \int p^{h}_{s}(x,y)\,dy = \frac{\E_{x}[h(X_{s})]}{h(x)} =1\,,
\end{equation*}
that is, $p^{h}_{s}(x,\cdot)$ is a transition probability density for every $s\ge 0$; we denote by $P^{h}_{x}(X_{s}\in\cdot)$ the corresponding transition probability and by $X^{h}=(X^{h}_{s})_{s\ge 0}$ the corresponding process. The transformation 
\[
p_{s}(x,y)\mapsto p_{s}(x,y)h(y)/h(x)
\] 
is called an $h$-transform, and the transformed process $X^h$ enjoys the familiar zero-variance property: 
\[
h(x) = \one_C(X^h_\tau)\varphi^{-1}\quad \text{a.s.}
\]
Here $\varphi=dP^h/dP$ denotes the likelihood ratio between the path measures $P^h$ and $P$. 

As before this optimal importance sampling change of measure amounts to a conditional probability.

\begin{lem}\label{lem:doob}
It holds that 
\begin{equation}
P_{x}^{h}(X_{\tau}\in C) = 1\quad \forall x\in\R^{d}\,.
\end{equation}
Moreover $P_{x}^{h}(\cdot) = P_{x}(\cdot|X_{\tau}\in C)$, i.e.~the law of $X^{h}$ is the law of $X$ conditioned on $\{X_{\tau}\in C\}$. 
\end{lem}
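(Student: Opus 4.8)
The plan is to identify the Radon-Nikodym derivative of $P^h_x$ with respect to $P_x$, restricted to the history $\mathcal{F}_\tau$ of the stopped process, as the likelihood ratio $\varphi = h(X_\tau)/h(x)$, and then to read off both assertions from this single identity together with the boundary values of the committor $h$. Throughout I restrict to initial points with $h(x)>0$, which is guaranteed here by the strict positivity of the transition density $p_s(x,\cdot)$.

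First I would compute the finite-dimensional distributions of $X^h$. For $0 < s_1 < \cdots < s_n$ the joint density under $P^h_x$ is the product $\prod_{k} p^h_{s_k - s_{k-1}}(y_{k-1}, y_k)$ with $y_0 = x$ and $s_0 = 0$; inserting $p^h_s(x,y) = p_s(x,y)\,h(y)/h(x)$, the successive factors $h(y_k)/h(y_{k-1})$ telescope and leave $h(y_n)/h(x)$ times the density of $X$ under $P_x$. Hence on each $\sigma$-algebra $\mathcal{F}_t$ generated by the process up to a fixed time $t$,
\[
\frac{dP^h_x}{dP_x}\bigg|_{\mathcal{F}_t} = \frac{h(X_t)}{h(x)}\,.
\]
Because $h$ is harmonic by \eqref{harmonic}, the right-hand side is a $P_x$-martingale starting at $1$, which confirms the consistency of the change of measure.

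Next I would pass from a fixed time to the stopping time $\tau$. Since $0 \le h \le 1$, the process $h(X_{t \wedge \tau})/h(x)$ is a \emph{bounded} martingale, and $\tau < \infty$ almost surely; optional stopping (equivalently, dominated convergence as $t \to \infty$) therefore extends the identity to $\mathcal{F}_\tau$,
\[
\frac{dP^h_x}{dP_x}\bigg|_{\mathcal{F}_\tau} = \frac{h(X_\tau)}{h(x)} = \varphi\,.
\]
The crucial observation is that at the exit time the process sits on $\partial A \cup \partial C$, where the committor takes only its two boundary values $h|_{\partial C} = 1$ and $h|_{\partial A} = 0$; consequently $h(X_\tau) = \one_C(X_\tau)$ pointwise. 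For the first claim, reweighting the indicator of $\{X_\tau \in C\}$ then gives
\[
P^h_x(X_\tau \in C) = \E_x\!\left[\one_C(X_\tau)\frac{h(X_\tau)}{h(x)}\right] = \frac{1}{h(x)}\,\E_x[\one_C(X_\tau)] = \frac{h(x)}{h(x)} = 1\,,
\]
where I used $h(X_\tau)\one_C(X_\tau) = \one_C(X_\tau)$ and $\E_x[\one_C(X_\tau)] = h(x)$. For the second claim, for any $B \in \mathcal{F}_\tau$ the same reweighting and the identity $h(X_\tau) = \one_C(X_\tau)$ yield
\[
P^h_x(B) = \E_x\!\left[\one_B \,\frac{h(X_\tau)}{h(x)}\right] = \frac{\E_x[\one_B \,\one_C(X_\tau)]}{h(x)} = \frac{P_x(B \cap \{X_\tau \in C\})}{P_x(X_\tau \in C)} = P_x(B \mid X_\tau \in C)\,,
\]
which is exactly the conditioning statement.

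I expect the only genuine technical point to be the justification of the passage to the stopping time $\tau$ — that is, verifying that optional stopping applies so that $h(X_\tau)/h(x)$ is indeed the correct Radon-Nikodym density on $\mathcal{F}_\tau$ and not merely on the fixed-time $\mathcal{F}_t$. This rests entirely on $h$ being bounded and harmonic (so that $h(X_{t\wedge\tau})$ is a bounded martingale) together with the almost-sure finiteness of $\tau$. Everything else reduces to the telescoping of the $h$-factors in the transition densities and the two one-line reweighting computations above.
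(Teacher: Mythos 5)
Your proof is correct and takes essentially the same route as the paper's: both identify the change of measure on $\mathcal{F}_\tau$ as $h(X_\tau)/h(x)$ (the paper by defining $\E_x^h$ through the single-time reweighting and invoking optional stopping, you by telescoping the transition densities and then stopping the bounded martingale $h(X_{t\wedge\tau})/h(x)$), and both conclude via $h(X_\tau)=\one_C(X_\tau)$. Your write-up is in fact more complete, since the paper proves only the identity $P_x^h(X_\tau\in C)=1$ and omits the conditioning statement $P_x^h(\cdot)=P_x(\cdot\,|\,X_\tau\in C)$ ``for brevity,'' which your final reweighting computation supplies.
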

\begin{proof}
Let 
\[
\E_{x}^{h}[g(X_{s})] = \frac{1}{h(x)} \E_{x}[g(X_{s})h(X_{s})]\,,
\]
the expectation \wrt $P_x^h$ of any bounded and measurable function $g\colon\R^{d}\to \R$.  Setting $g(x)=\one_C(x)$, it suffices to show that 
\[
\E_{x}^{h}[\one_C(X_\tau)] = 1\quad \forall x\in\R^{d}\,.
\]
Then, by the optional stopping theorem, 
\begin{align*}
\E_{x}^{h}[\one_C(X_\tau)] & =  \frac{1}{h(x)}\E_{x}[\one_C(X_\tau)h(X_{\tau})]\\ & = \frac{1}{h(x)}\E_{x}[\one_C(X_\tau)]\,,
\end{align*}
and the last expression is equal to one by definition of $h$. 
The rest of the proof is omitted for brevity.
\end{proof}

\section{Conditioning of diffusions}\label{sec:doobCtr}

We will now characterise the $h$-transform in concrete situations, specifically, when $X$ is a diffusion. 
To this end, we will show that the $h$-transform can be realised by a change of drift in the SDE (\ref{sde}). 
By definition, the function $h$ is harmonic, and so it can be characterised as the solution to an elliptic boundary value problem, with the second-order differential operator 
\begin{equation}
L \phi = \frac{1}{2}\sigma\sigma^{T}\colon\nabla^{2}\phi + b\cdot\nabla\phi\,,\quad \phi\in\cD(L)\,.
\end{equation}
We let $O\subset\R^{d}$ denote an open and bounded set with $C\subset\partial O$, and we define 
\begin{equation}
\tau=\inf\{t>0\colon X_{t}\notin O\}
\end{equation}
to be the first exit time of the set $O$. Then $h$ solves the boundary value problem
\begin{equation}\label{bvp}
\begin{aligned}
L h & = 0\,,   & x& \in O \\
h  & = \one_{C}\,, & x & \in\partial O  \,.
\end{aligned}
\end{equation}
For reasons that will become clear in a moment, we need $h$ to be strictly positive. Therefore we define a regularised indicator function $\one^{\eps}_{C} = \one_{C} + \eps$. Further assuming that $\sigma\sigma^{T}$ is invertible with uniformly bounded inverse the operator $L$ is uniformly elliptic and thus the regularised boundary value problem  
\begin{equation}\label{bvpeps}
\begin{aligned}
L h^{\eps} & = 0\,,   & x& \in O \\
h^{\eps}  & = \one^{\eps}_{C}\,, & x & \in\partial O  \,.
\end{aligned}
\end{equation}
Then, $h^{\eps}=h+\eps$ which, by the strong maximum principle, is a strictly positive function on the closure $\overline{O}$. Now define the function $V^{\eps}=-\log h^{\eps}$ that solves the nonlinear elliptic boundary value problem
\begin{equation}\label{hjbeps}
\begin{aligned}
L V^{\eps} - \frac{1}{2}|\nabla V^{\eps}|_a^{2} & = 0\,,   & x& \in O \\
V^{\eps}  & = -\log\one^{\eps}_{C}\,, & x & \in\partial O  \,,
\end{aligned}
\end{equation}
where we have introduced the shorthands $a=\sigma\sigma^T$ and $|v|_a^2=|a^{1/2}v|^2$.  
Noting that 
\begin{equation}\label{min}
- \frac{1}{2}|\nabla V^{\eps}|_a^{2} = \min_{u\in\R^{d}}\left\{(\sigma u)\cdot \nabla V^{\eps} + \frac{1}{2}|u|^{2} \right\}
\end{equation}
we realise that (\ref{hjbeps}) is the dynamic programming equation or Hamilton-Jacobi-Bellman (HJB) equation of the following optimal control problem: minimise the cost
\begin{equation}
J^\eps(u) = \E\left[\frac{1}{2}\int_{0}^{\tau} |u_{s}|^{2}dt - \log\one^\eps_{C}(X^{u}_{\tau})\right]\,,
\end{equation}
subject to 
\begin{equation}\label{optsde}
dX^{u}_{s} = \left(b(X^{u}_{s}) + \sigma(X^{u}_{s})u_{s}\right)ds + \sigma(X^{u}_{s})dB_{s}\,,
\end{equation}
with initial data $X^{u}_{0}=x$.
The optimal control $u^{*}=(u_{s}^{*})_{s\ge 0}$ is given by the minimiser in (\ref{min}): 
\begin{equation}\label{optControl}
u^{*}_{s} = - \sigma(X^{u}_{s})^T\nabla V^{\eps}(X^{u}_{s})\,.
\end{equation}
Letting $\eps\to 0$ in (\ref{hjbeps}), the function $V^\eps=\min_u J^\eps(u)$, considered as function of the initial conditions, converges to the viscosity solution of dynamic programming equation (\ref{hjbeps}), with $\one_{C}^\eps$ replaced by $\one_{C}$. Bearing in mind that $V^{\eps}=-\log h^{\eps}$, it follows that, as $\eps\to 0$, the optimal control (\ref{optControl}) realises the $h$-transform, in that the (weak) solution of the controlled SDE 
\begin{equation}\label{doobsde}
dZ_{s} = \left(b + (\sigma\sigma^T)\nabla\log h\right)\!(Z_{s})ds + \sigma(Z_{s})dB_{s}\,,
\end{equation} 
with initial condition $Z_{0}=x$ has the same law as $X^h$.

\textcolor{black}{
\section{Finite-dimensional Girsanov formula}\label{sec:girsanov}
We will explain the basic idea behind Girsanov's Theorem and the change of measure formulae (\ref{IS})--(\ref{girsanov}) for finite-dimensional Gaussian measures, partly following an idea in \citet{papa2012}.}

\textcolor{black}{
Let $\mu$ be a probability measure on a measurable space $(\Omega,\cE)$, on which an $m$-dimensional random variable $B\colon\Omega\to\R^m$ is defined. Further suppose that $B$ has standard Gaussian distribution $\mu_{B}=\mu\circ B^{-1}$. Given a (deterministic) vector $b\in\R^d$ and a matrix $\sigma\in\R^{d\times m}$, we define a new random variable $X\colon\Omega\to\R^d$ by
\begin{equation}\label{X1}
X(\omega) = b + \sigma B(\omega)\,.
\end{equation}
Since $B$ is Gaussian, so is $X$, with mean $b$ and covariance $C=\sigma\sigma^T$. Now let
$u\in\R^d$ and define the shifted Gaussian random variable
\[
B^u(\omega) = B(\omega) - u\,
\]
and consider the alternative representation 
\begin{equation}\label{X2}
X(\omega) = b^u + \sigma B^u(\omega)
\end{equation}
of $X$ that is equivalent to (\ref{X1}) if and only if 
\[\sigma u = b^u - b\] 
has a solution (that may not be unique though). The idea of Girsanov's Theorem is to seek a probability measure $\nu\ll \mu$ such that $B^u$ is standard Gaussian under $\nu$, and we claim that such a $\nu$ should have the property
\begin{equation}
\frac{d\nu}{d\mu}(\omega) = \exp\left(u\cdot B(\omega) - \frac{1}{2}|u|^2\right)
\end{equation}
or, equivalently, 
\begin{equation}
\frac{d\nu}{d\mu}(\omega) = \exp\left(u\cdot B^u(\omega) + \frac{1}{2}|u|^2\right)\,. 
\end{equation}
To show that $B^u$ is indeed
standard Gaussian under the above defined measure $\nu$, it is sufficient to check that for any measurable (Borel) set $A\subset\R^m$, the probability $\nu(B^u\in A)$ is given by the integral against the standard Gaussian density: 
\[
\nu(B^u\in A) = \frac{1}{(2\pi)^{m/2}}\int_A \exp\left(-\frac{|x|^2}{2}\right)dx\,.
\]
Indeed, since $B$ is standard Gaussian under $P$, it follows that the probability $\nu(B^u\in A)$ is equal to 
\begin{align*}
& \int\displaylimits_{\{\omega\,:\, B^u(\omega)\in A\}} \exp\left(u\cdot B(\omega) - \frac{1}{2}|u|^2\right)d\mu(\omega)\\
 & = \int\displaylimits_{\{\omega\,:\, B(\omega)-u\in A\}} \exp\left(u\cdot B(\omega) - \frac{1}{2}|u|^2\right)d\mu(\omega)\\
  & = \frac{1}{(2\pi)^{m/2}}\int\displaylimits_{\{x\,:\,x-u\in A\}} \exp\left(u\cdot x - \frac{1}{2}|u|^2 - \frac{1}{2}|x|^2\right)dx\\
  & = \frac{1}{(2\pi)^{m/2}}\int\displaylimits_{\{x\,:\,x-u\in A\}} \exp\left( - \frac{|x-u|^2}{2}\right)dx\\
    & = \frac{1}{(2\pi)^{m/2}}\int_{A} \exp\left( - \frac{|y|^2}{2}\right)dy\,,
\end{align*}
showing that $B^u$ has a standard Gaussian distribution under $\nu$. Hence, by the definition of $\nu$, it holds that 
\begin{equation}\label{is1}
\bE[f(X)] = \bE_\nu\!\left[f(X)\exp\left(-u\cdot B^u(\omega) - \frac{1}{2}|u|^2\right)\right] 
\end{equation}
for any bounded and measurable function $f\colon\R^d\to\R$, where $\bE[\cdot]=\bE_\mu[\cdot]$ denotes the expectation \wrt the reference measure $\mu$. 
Now let 
\[
X^u(\omega) = b^u + \sigma B(\omega)\,.
\]
Since the distribution of the pair $(X^u,B)$ under $\mu$  is the same as the distribution of the pair $(X,B^u)$ with $X=b^u+\sigma B^u$ under $\nu$, the identity (\ref{is1}) entails that  
\begin{equation}\label{is2}
\bE[f(X)] = \bE\left[f(X^u)\exp\left(-u\cdot B(\omega) - \frac{1}{2}|u|^2\right)\right],
\end{equation}
which is the finite dimensional analogue of (\ref{IS}).} 

\section*{References}

\bibliography{fbsde}
\bibliographystyle{plainnat}

\end{document}